\documentclass[11pt]{amsart}

\title{Extension operators and geometric decompositions}
\author{Yakov Berchenko-Kogan}

\usepackage{amsthm,braket,hyperref,tikz}
\usepackage[backend=bibtex]{biblatex}
\bibliography{references.bib}
\usetikzlibrary{cd}

\newtheorem{theorem}{Theorem}[section]
\newtheorem{proposition}[theorem]{Proposition}
\newtheorem{corollary}[theorem]{Corollary}
\newtheorem{lemma}[theorem]{Lemma}
\theoremstyle{definition}
\newtheorem{definition}[theorem]{Definition}

\newtheorem{example}[theorem]{Example}
\newtheorem{remark}[theorem]{Remark}

\newcommand\fT{\mathfrak T}
\newcommand\fS{\mathfrak S}

\newcommand\oF{\mathring{\cF}}
\newcommand\cT{\mathcal T}
\newcommand\cF{\mathcal F}
\newcommand\cI{\mathcal I}
\newcommand\cC{\mathcal I^\perp}
\newcommand\R{\mathbb R}
\newcommand\cP{\mathcal P}
\newcommand\oP{\mathring{\cP}}

\renewcommand\phi\varphi

\DeclareMathOperator\starop\star
\DeclareMathOperator\tr{tr}

\newcommand\D{\mathop{}\!d}
\newcommand\dl{\D\lambda}

\begin{document}

\begin{abstract}
  Geometric decomposition is a widely used tool for constructing local bases for finite element spaces. For finite element spaces of differential forms on simplicial meshes, Arnold, Falk, and Winther showed that geometric decompositions can be constructed from extension operators satisfying certain properties. In this paper, we generalize their results to function spaces and meshes satisfying very minimal hypotheses, while at the same time reducing the conditions that must hold for the extension operators. In particular, the geometry of the mesh and the mesh elements can be completely arbitrary, and the function spaces need only have well-defined restrictions to subelements. In this general context, we show that extension operators yield geometric decompositions for both the primal and dual function spaces. Later, we specialize to simplicial meshes, and we show that, to obtain geometric decompositions, one needs only to construct extension operators on the reference simplex in each dimension. In particular, for simplicial meshes, the existence of geometric decompositions depends only on the dimension of the mesh.
\end{abstract}

\maketitle

\section{Introduction}
Local bases for finite element spaces yield sparse matrices when implementing the finite element method, resulting in efficient computation. In \cite{afw09}, Arnold, Falk, and Winther showed that local bases arise from \emph{geometric decompositions}, that is, decompositions of the global function space as a direct sum of function spaces associated to mesh elements. In turn, Arnold, Falk, and Winther showed that one can construct a geometric decomposition from what they call a \emph{consistent family of extension operators}. Arnold, Falk, and Winther's framework of geometric decomposition has proven to be useful in a wide variety of finite element contexts; see, for example, \cite{bk24, chchhuwe24, chra16, giklsa19}. However, the results in \cite{afw09} are limited to finite element spaces of differential forms on simplicial meshes, and constructing extension operators and checking consistency is not always easy. Consequently, in new contexts, geometric decompositions have been constructed by way of analogy with \cite{afw09}, rather than by direct application of results. To address this issue, this paper extends the results in \cite{afw09} to very general meshes and function spaces, while at the same time simplifying the conditions needed for the results to apply.

Specifically, our results can apply to meshes regardless of the geometry (simplicial, hexahedral, mixed) of the elements. In fact, the only things we need to know about the mesh are what the elements are, and which elements are subelements of which. In other words, we need a partially ordered set, which we denote $\fT$. So, for example, if $\cT$ is a triangulation, then $\fT$ will be the set of vertices, edges, faces, etc., of the triangulation, and we say that $K\le F$ if $K$ is contained in $F$. Our function spaces are likewise very general: we need to be able to associate a vector space $\cF(F)$ to every element $F\in\fT$, and we need to have restriction maps $\tr^F_K\colon\cF(F)\to\cF(K)$ whenever $K$ is a subelement of $F$.

The fact that $\fT$ is only required to have the structure of a partially ordered set means that, in principle, rather than representing a mesh, it could more broadly represent the hierarchical structures that we see in mesh refinement, macroelements, etc. We will see a hint of this broader notion in this paper, where we view the mesh $\cT$ itself as an element of the partially ordered set, yielding the global function space $\cF(\cT)$ within the same framework as the single-element function spaces $\cF(F)$.

Likewise, while our standard examples of function spaces are the Lagrange elements $\cF=\mathcal P_r$ or, more generally, the finite element exterior calculus spaces $\mathcal P_r\Lambda^k$, $\mathcal P_r^-\Lambda^k$, $\mathcal Q_r^-\Lambda^k$, and $\mathcal S_r\Lambda^k$ \cite{araw14, arbobo15, afw06}, the framework can be applied more broadly. For example, the results apply equally well to any space of covariant tensors, where the trace maps $\tr^F_K$ are defined to be pullbacks via the inclusion $K\hookrightarrow F$. Such spaces of covariant tensors include Regge finite elements \cite{li2018regge}, and, more generally, double forms, that is, form-valued forms \cite{bega25}, which was our motivation for this work.

Going even further, rather than thinking about the maps $\tr^F_K$ as restrictions, we can take a more general perspective. Specializing to simplicial triangulations in two dimensions for clarity, we can think of $\cF$ on triangles as defining the local function spaces, $\cF$ on edges as defining the interelement continuity conditions, and $\cF$ on vertices as defining the relations between the interelement continuity conditions. Thus, in principle, we could use this framework to consider, for example, $C^1$ elements, where we can think of $\tr^T_E\colon\cF(T)\to\cF(E)$ as recording the values and normal derivatives of a function on the triangle $T$ at the edge $E$, so then $\cF(E)$ is a vector space of pairs of functions, recording the values and normal derivatives, respectively. There is also no requirement that the function spaces be finite-dimensional, although we do need sufficient regularity to define trace maps.

In addition to generalizing the meshes and function spaces for which we construct geometric decompositions, we also show that one can get away with a weaker notion of consistent extension operators than that in \cite{afw09}, with fewer things to construct and to check. Specifically, in \cite{afw09}, the authors construct geometric decompositions given extension operators $E_K^F\colon\cF(K)\to\cF(F)$ satisfying certain consistency properties. In this paper, we show that it suffices to only have extension operators defined on the subspace of vanishing trace, that is, $E_K^F\colon\oF(K)\to\cF(F)$, where $\oF(K)$ is the subspace of $\cF(K)$ consisting of all $\phi$ that vanish when restricted to any proper subface of $K$. Consequently, in addition to having a more limited task in constructing the extension operators, the properties to check the extension operators' consistency become simpler. From these more limited extension operators, we still obtain the geometric decomposition. Moreover, if needed, we can come full circle and use the geometric decomposition to construct extension operators on the full space $\cF(K)\to\cF(F)$ that satisfy the consistency properties in \cite{afw09}, but we do not need these full operators a priori.

The bulk of the paper is Section~\ref{sec:decomp}, where we define our general notion of function spaces and extension operators, construct the geometric decomposition (Theorem~\ref{thm:decomp} and corollaries), and construct the dual decomposition (Theorem~\ref{thm:dualdecomp} and corollaries). In Section~\ref{sec:simplicial}, we specialize to simplicial triangulations, and show in Theorem~\ref{thm:simpext} that extension operators can be defined independently of the mesh: It suffices to construct extension operators $\oF(T^n)\to\cF(Q^{n+1})$, where $T^n$ is the simplex in the first orthant defined by $\lambda_0+\dots+\lambda_n=1$, and $Q^{n+1}$ is the simplex in the first orthant defined by $\lambda_0+\dots+\lambda_n\le1$. In other words, if one constructs extension operators $\oF(T^n)\to\cF(Q^{n+1})$ for a particular function space, then one obtains geometric decompositions for all simplicial meshes.

We end the introduction by noting that geometric decompositions do not always exist. A simple example is the space $\mathcal P_0(\cT)$ of continuous piecewise constant scalar fields. Presuming $\cT$ is connected, such scalar fields must of course be globally constant, so this space is just the one-dimensional space of constant scalar fields on $\cT$. In particular, $\mathcal P_0(\cT)$ does not have a local basis. We give a less trivial example at the end of the paper in Example~\ref{eg:constant}.

\section{Local extension operators and geometric decomposition}\label{sec:decomp}
\subsection{Function spaces and extension operators}
We begin by defining function spaces on a partially ordered set. As discussed, although the context is quite general, an example to keep in mind is a triangulation $\cT$, in which case the partially ordered set $\fT$ is the set of vertices, edges, faces, etc., of the triangulation, ordered by inclusion. Meanwhile, an example of a function space $\cF$ is $\cP_r\Lambda^k$ or even simply the Lagrange space $\cP_r$.
\begin{definition}\label{def:funspace}
  Let $\fT$ be a partially ordered set. A \emph{function space} $\cF$ is
  \begin{itemize}
  \item a vector space $\cF(F)$ associated to every $F\in\fT$, and
  \item a linear map $\tr_K^F\colon\cF(F)\to\cF(K)$, called the \emph{trace map}, associated to every relation $K\le F$,
  \end{itemize}
  satisfying the functoriality properties that
  \begin{itemize}
  \item $\tr_F^F\colon\cF(F)\to\cF(F)$ is the identity, and
  \item $\tr_K^G\circ\tr_G^F=\tr_K^F$ whenever $K\le G\le F$.
  \end{itemize}
\end{definition}

\begin{definition}\label{def:vanishingtrace}
  We define the subspace of functions with \emph{vanishing trace} as
  \begin{equation*}
    \oF(F):=\Set{\phi\in\cF(F)|\tr_K^F\phi=0\text{ for all }K<F}.
  \end{equation*}
\end{definition}

We can construct global function spaces from local ones.

\begin{definition}\label{def:globalfunctionspace}
  Let $\cF$ be a function space on a partially ordered set $\fT$. We define the \emph{global function space} $\cF(\cT)$ via the inverse limit construction, that is
  \begin{equation*}
    \cF(\cT):=\Set{\left(\phi_F\right)_{F\in\fT}\in\prod_{F\in\fT}\cF(F)|\tr^F_K\phi_F=\phi_K\text{ for all }K\le F}.
  \end{equation*}
  We define $\tr^\cT_F\colon\cF(\cT)\to\cF(F)$ to be the natural projection maps.
\end{definition}
\begin{proposition}\label{prop:functionhat}
  We can formally add $\cT$ to $\fT$ as the top element, yielding a partially ordered set $\hat\fT:=\fT\sqcup\{\cT\}$ with $F<\cT$ for all $F\in\fT$. Then the vector space $\cF(\cT)$ and the trace maps $\tr^\cT_F$ make $\cF$ a function space on $\hat\fT$, and we have $\oF(\cT)=0$.
\end{proposition}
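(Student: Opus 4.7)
The plan is to verify Definition~\ref{def:funspace} directly for the extended partially ordered set $\hat\fT$. Since $\cF$ is already assumed to be a function space on $\fT$, all the conditions that only involve elements of $\fT$ are automatic, so only the conditions involving the new top element $\cT$ need checking. The vector space attached to $\cT$ is $\cF(\cT)$ from Definition~\ref{def:globalfunctionspace}, and the new trace maps are $\tr^\cT_F$ for $F\in\fT$ (the projections) together with $\tr^\cT_\cT$, which we take to be the identity on $\cF(\cT)$.

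For functoriality, the identity condition $\tr^\cT_\cT=\mathrm{id}$ holds by fiat. The only composition condition not already present in the original data is the one for chains $K\le F\le\cT$ with $K,F\in\fT$, namely $\tr^F_K\circ\tr^\cT_F=\tr^\cT_K$. But this is built into the inverse limit: for any $\phi=(\phi_G)_{G\in\fT}\in\cF(\cT)$, the compatibility condition in Definition~\ref{def:globalfunctionspace} gives $\tr^F_K\phi_F=\phi_K$, so that
\begin{equation*}
  \tr^F_K\bigl(\tr^\cT_F\phi\bigr)=\tr^F_K\phi_F=\phi_K=\tr^\cT_K\phi.
\end{equation*}
Chains involving $\cT$ more than once reduce to this case via the identity $\tr^\cT_\cT$.

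For the statement $\oF(\cT)=0$, we unpack Definition~\ref{def:vanishingtrace} in $\hat\fT$: an element $\phi\in\oF(\cT)$ is one with $\tr^\cT_K\phi=0$ for every $K<\cT$ in $\hat\fT$, i.e.\ for every $K\in\fT$. Since $\tr^\cT_K\phi=\phi_K$ by definition, this forces $\phi_K=0$ for all $K\in\fT$, and hence $\phi=0$ as an element of the product.

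There is no real obstacle here; the proposition is essentially repackaging the universal property of the inverse limit into the language of function spaces on a partially ordered set, and the only point to be careful about is keeping track of which relations in $\hat\fT$ are new so that functoriality need only be checked on those.
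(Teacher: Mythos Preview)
Your proof is correct and follows essentially the same approach as the paper's: both reduce the verification to the single new composition law $\tr^F_K\circ\tr^\cT_F=\tr^\cT_K$, deduce it from the compatibility condition in the inverse limit, and obtain $\oF(\cT)=0$ by observing that vanishing of all projections forces the tuple to be zero. Your version is slightly more explicit (e.g.\ stating $\tr^\cT_\cT=\mathrm{id}$ by fiat), but the argument is the same.
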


\begin{proof}
  The only functoriality properties that need to be checked are that $\tr^F_K\tr^\cT_F=\tr^\cT_K$ whenever $K\le F$, which follows from the condition $\tr^F_K\phi_F=\phi_K$ in the definition of $\cF(\cT)$. Meanwhile, if $\phi=(\phi_F)_{F\in\fT}\in\oF(\cT)$, then each $\phi_F=\tr^\cT_F\phi$ is zero by definition, so $\phi=0$.
\end{proof}

Unlike in \cite{afw09}, our extension operators need only be defined on the space of vanishing trace.

\begin{definition}\label{def:local}
  Let $\cF$ be a function space on a partially ordered set $\fT$, and let $\cF(\cT)$ be the corresponding global function space. For $K\in\fT$, a \emph{local extension operator} is a linear map $E^{\cT}_K\colon\oF(K)\to\cF(\cT)$ such that
  \begin{itemize}
  \item $\tr^{\cT}_KE^{\cT}_K\colon\oF(K)\to\cF(K)$ is the inclusion, and
  \item $\tr^{\cT}_FE^{\cT}_K=0$ if $K\not\le F$.
  \end{itemize}
\end{definition}
One way of thinking about the second condition is that if $\psi$ is a nonzero element of $\oF(K)$ and $\phi$ is its extension $\phi=E^{\cT}_K\psi\in\cF(\cT)$, then $\phi$ cannot vanish on faces $F$ that contain $K$, for then it would vanish on $K$ as well. The second condition requires that $\phi$ vanish everywhere else. In particular, $\phi$ is local, since it is nonzero only on faces that contain $K$.

Equivalently, we can define extension operators without reference to the global function space.
\begin{definition}\label{def:consistent}
  Let $\cF$ be a function space on a partially ordered set $\fT$. We define a \emph{consistent family of extension operators} to be linear maps $E^F_K\colon\oF(K)\to\cF(F)$ for every $K,F\in\fT$, such that
  \begin{itemize}
  \item $E^K_K\colon\oF(K)\to\cF(K)$ is the inclusion,
  \item $E^F_K=0$ if $K\not\le F$, and
  \item $\tr^F_GE^F_K=E^G_K$ whenever $G\le F$.
  \end{itemize}
\end{definition}

Note that the extension maps that appear in the three properties are all of the form $E_K^\bullet$, so, as with local extension operators, the concept of a consistent family of extension operators makes sense even if we fix a particular $K$.

\begin{remark}
  Our definition of a consistent family of extension operators differs from that in \cite{afw09}. In particular, we only require that the extension operators be defined on the spaces of vanishing trace, and our required properties are essentially \cite[Equation~(4.5) and Lemma~4.1]{afw09}. As we will show in Corollary~\ref{cor:afw} via the geometric decomposition, we can extend the definition of extension operators from the subspace of vanishing trace $\oF(K)$ to the full space $\cF(K)$, and these operators satisfy the definition of a consistent family of extension operators in \cite{afw09}. In other words, we take as our definition the simple ``immediate implications'' of Arnold, Falk, and Winther's definition, and we will eventually show that, via the geometric decomposition, these ``immediate implications'' actually imply Arnold, Falk, and Winther's original definition.
\end{remark}

We now prove an equivalence between local extension operators and consistent families of extension operators.

\begin{proposition}\label{prop:localconsistent}
  A local extension operator for every $K\in\fT$ yields a consistent family of extension operators via $E^F_K:=\tr^\cT_F E^\cT_K$. Conversely, a consistent family of extension operators yields local extension operators via $E^\cT_K:=\left(E_K^F\right)_{F\in\fT}$.
\end{proposition}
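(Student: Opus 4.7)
The proof is a routine unpacking of definitions, so the plan is to verify each bullet in both directions using only functoriality of trace and the defining properties of the input data.

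For the forward direction, suppose we have local extension operators $E^\cT_K\colon\oF(K)\to\cF(\cT)$, and set $E^F_K:=\tr^\cT_F E^\cT_K$. I would check the three conditions of Definition~\ref{def:consistent} in turn. The identity $E^K_K=\tr^\cT_K E^\cT_K$ is the inclusion directly by the first defining property of a local extension operator. The vanishing $E^F_K=\tr^\cT_F E^\cT_K=0$ when $K\not\le F$ is the second defining property. Finally, for $G\le F$, the functoriality $\tr^F_G\tr^\cT_F=\tr^\cT_G$ from Proposition~\ref{prop:functionhat} gives $\tr^F_G E^F_K=\tr^F_G\tr^\cT_F E^\cT_K=\tr^\cT_G E^\cT_K=E^G_K$.

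For the converse, suppose we have a consistent family $E^F_K$ and define, for $\psi\in\oF(K)$, the tuple $E^\cT_K\psi:=(E^F_K\psi)_{F\in\fT}$. The one thing that actually requires checking is that this tuple lies in $\cF(\cT)$, that is, that it satisfies the compatibility $\tr^F_G\phi_F=\phi_G$ whenever $G\le F$; but this is exactly the third bullet of Definition~\ref{def:consistent}. Then $\tr^\cT_K E^\cT_K\psi=E^K_K\psi=\psi$, giving the first condition in Definition~\ref{def:local}, and $\tr^\cT_F E^\cT_K\psi=E^F_K\psi=0$ whenever $K\not\le F$, giving the second.

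Since both constructions are built from the same relation $E^F_K=\tr^\cT_F E^\cT_K$, I would remark in passing that the two constructions are inverse to each other, so the correspondence really is an equivalence. There is no genuine obstacle in this proof; the only subtlety worth being explicit about is verifying that the tuple $(E^F_K\psi)_{F\in\fT}$ satisfies the inverse-limit compatibility before asserting that it defines a map into $\cF(\cT)$.
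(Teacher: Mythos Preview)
Your proof is correct and follows essentially the same approach as the paper's: both directions verify the required conditions directly from the definitions, with the only nontrivial step being the check that the tuple $(E^F_K\psi)_{F\in\fT}$ lands in $\cF(\cT)$, which you handle exactly as the paper does via the third bullet of Definition~\ref{def:consistent}.
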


\begin{proof}
  Assume that we have local extension operators $E^\cT_K$ for every $K\in\fT$, and let $E^F_K:=\tr^\cT_FE^\cT_K$. We check the three properties of a consistent family of extension operators. First, we have $E_K^K=\tr_K^\cT E_K^\cT$, which is the inclusion. Next, if $K\not\le F$, then we have $E^F_K=\tr^{\cT}_FE^{\cT}_K=0$. Finally, if $G\le F$, then $\tr_G^FE_K^F=\tr_G^F\tr_F^\cT E^\cT_K=\tr_G^\cT E^\cT_K=E^G_K$.

  Conversely, assume that we have a consistent family of extension operators, and define $E_K^\cT:=\left(E_K^F\right)_{F\in\fT}$. A priori, the codomain of this operator is $\prod_{F\in\fT}\cF(F)$; we check that the image is indeed in $\cF(\cT)$ because for $G\le F$ we have $\tr_G^FE_K^F=E_K^G$. Then, by definition, $\tr^\cT_KE^\cT_K=E_K^K$, which is the inclusion. Likewise, if $K\not\le F$, then $\tr^\cT_FE^\cT_K=E^F_K=0$.
\end{proof}

As a result, we can treat local extension operators and consistent families of extension operators interchangeably. Moreover, we can view them together as a consistent family of extension operators on the extended partially ordered set $\hat\fT:=\fT\sqcup\{\cT\}$.

\begin{proposition}\label{prop:consistenthat}
  Let $\cF$ be a function space on a partially ordered set $\fT$, and let $\cF(\cT)$ be the corresponding global function space. Assume that we have local extension operators $E_K^\cT$ for every $K\in\fT$, and correspondingly a consistent family of extension operators $E_K^F$ on $\fT$. Then the operators $E_K^\cT$ and $E_K^F$ together yield a consistent family of extension operators on the partially ordered set $\hat\fT=\fT\sqcup\{\cT\}$.
\end{proposition}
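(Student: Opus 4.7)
The plan is to verify the three defining properties of a consistent family of extension operators from Definition \ref{def:consistent} for the enlarged family indexed by $\hat\fT$, noting that the only new relations in $\hat\fT$ are those involving the top element $\cT$. The new operators we need are $E^\cT_\cT$ and $E^F_\cT$ for $F\in\fT$, and since $\oF(\cT)=0$ by Proposition \ref{prop:functionhat}, each of these is forced to be the zero map.

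With these definitions in hand, I would systematically case-check the three axioms. The inclusion property $E^K_K=\mathrm{id}$ holds for $K\in\fT$ by hypothesis, and for $K=\cT$ it is the trivial inclusion $0\hookrightarrow\cF(\cT)$. The vanishing property $E^F_K=0$ when $K\not\le F$ continues to hold for $K,F\in\fT$ by hypothesis; the case $K=\cT, F\in\fT$ is automatic because the domain $\oF(\cT)$ is zero; and the case $F=\cT$ never arises as a hypothesis since $K\le\cT$ always.

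The substantive verification is the functoriality $\tr^F_G E^F_K=E^G_K$ for $G\le F$ in $\hat\fT$. The subcase with $G,F\in\fT$ holds by hypothesis. When $F=\cT$ and $G,K\in\fT$, the identity $\tr^\cT_G E^\cT_K=E^G_K$ is precisely the formula used in Proposition \ref{prop:localconsistent} to construct the consistent family on $\fT$ out of the local extension operators, so it holds by that proposition. When $F=G=\cT$, the trace map $\tr^\cT_\cT$ is the identity, so the equality is tautological. The remaining cases, in which $K=\cT$, all reduce to $0=0$ since the source $\oF(\cT)$ is zero.

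I do not anticipate any real obstacle here; the proposition is essentially a bookkeeping check that assembling the local extension operators and the consistent family on $\fT$ respects the partial order on $\hat\fT$. The one conceptual observation that makes the argument short is that $\oF(\cT)=0$, which trivializes every axiom with $K=\cT$ and leaves exactly one nontrivial identity to verify, namely the one already built into the equivalence in Proposition \ref{prop:localconsistent}.
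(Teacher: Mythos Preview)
Your proposal is correct and follows essentially the same approach as the paper's proof: both arguments first observe that $\oF(\cT)=0$ forces $E^F_\cT=0$ and thereby trivializes every axiom with $K=\cT$, then dispose of the remaining cases by noting that the only genuinely new identity to check is $\tr^\cT_G E^\cT_K=E^G_K$ for $G\in\fT$, which is exactly the defining formula from Proposition~\ref{prop:localconsistent}. The case structure and reasoning match the paper almost line for line.
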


\begin{proof}
  We first complete the definition by noting that $E_\cT^F=0$ for any $F\in\hat\fT$ because the domain of $E_\cT^F$ is $\oF(\cT)=0$. We now check that the three conditions in Definition~\ref{def:consistent} still hold if one of the elements is $\cT$. We vacuously have that $E_\cT^\cT\colon0\to\cF(\cT)$ is the inclusion. For the second property, if $K=\cT$ then $E_\cT^F=0$ as just observed, and if $F=\cT$, then we have $K\le\cT$, so the condition is vacuous. For the third property, if $K=\cT$, then both sides are zero. If $F=\cT$, then we must check $\tr_G^\cT E_K^\cT=E_K^G$. For $G\neq\cT$, this equation is just the equation by which local extension operators yield a consistent family of extension operators in Proposition~\ref{prop:localconsistent}. If $G=\cT$, the equation is a tautology because $\tr_\cT^\cT$ is the identity.
\end{proof}

The conclusion of these results is that we do not lose anything by assuming that our partially ordered sets have a top element: If we have a partially ordered $\fT$ without a top element, we can add in a top element $\cT$ to obtain $\hat\fT:=\fT\sqcup\{\cT\}$. As we have shown, a function space on $\fT$ yields a function space on $\hat\fT$, and a consistent family of extension operators on $\fT$ yields a consistent family of extension operators on $\hat\fT$.

\subsection{Geometric decomposition}
We now set out the prove the geometric decomposition theorem. Our proof is similar to the proof in \cite[Theorem 3.4]{afw09}, but in a much more general context, which ends up simplifying the proof in some ways. In particular, we do not need to have a notion of dimension for elements of $\hat\fT$, allowing us to include more abstract elements like the triangulation $\cT$ itself.

In what follows, we will work with a function space on a partially ordered set $\hat\fT$ with top element denoted $\cT$, but we need not assume that $\cF(\cT)$ is the global function space as defined above, and hence we need not assume that $\oF(\cT)=0$. 

\begin{definition}\label{def:ci}
  Let $\hat\fT$ be a partially ordered set with a top element denoted $\cT$, and let $\cF$ be a function space on $\hat\fT$. Let $\fS$ be a lower subset of $\hat\fT$, that is, if $F\in\fS$ and $K\le F$, then $K\in\fS$. Let $\cI(\fS)$ denote those elements of $\cF(\cT)$ that vanish on $\fS$, that is,
  \begin{equation*}
    \cI(\fS):=\Set{\phi\in\cF(\cT)|\tr^\cT_F\phi=0\text{ for all }F\in\fS}.
  \end{equation*}
\end{definition}

\begin{lemma}\label{lem:induct}
    Let $\hat\fT$ be a partially ordered set with a top element $\cT$, and let $\cF$ be a function space on $\hat\fT$. Assume that we have a consistent family of extension operators on $\hat\fT$. Let $\fS$ be a lower subset of $\hat\fT$, and let $T$ be a maximal element of $\fS$, so $\fS\setminus\{T\}$ is also a lower subset of $\hat\fT$. Then we have
  \begin{equation*}
    \cI(\fS\setminus\{T\})=\cI(\fS)\oplus E_T^\cT\oF(T).
  \end{equation*}
\end{lemma}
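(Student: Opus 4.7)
The plan is to explicitly produce the splitting via the candidate decomposition $\phi = (\phi - E_T^\cT \tr_T^\cT \phi) + E_T^\cT \tr_T^\cT \phi$. The key observation, which will be used repeatedly, is that since $T$ is maximal in $\fS$, any other element $F \in \fS \setminus \{T\}$ satisfies $T \not\le F$; otherwise maximality would force $F = T$. Consequently, the second bullet of Definition~\ref{def:consistent} (transferred to $\hat\fT$) gives $E_T^F = \tr_F^\cT E_T^\cT = 0$ for all $F \in \fS \setminus \{T\}$.

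The first step is to verify that both summands land in the right places. For any $\phi \in \cI(\fS \setminus \{T\})$, I would show $\tr_T^\cT \phi \in \oF(T)$: for $K < T$ we have $K \in \fS$ (lower subset) and $K \ne T$, so $K \in \fS\setminus\{T\}$; then by functoriality $\tr_K^T \tr_T^\cT \phi = \tr_K^\cT \phi = 0$. Next, set $\phi_0 := \phi - E_T^\cT \tr_T^\cT \phi$ and check $\phi_0 \in \cI(\fS)$. On $F = T$, use that $\tr_T^\cT E_T^\cT$ is the inclusion to get cancellation. On $F \in \fS \setminus \{T\}$, use $\tr_F^\cT \phi = 0$ together with the maximality observation $\tr_F^\cT E_T^\cT = 0$. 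This establishes $\cI(\fS\setminus\{T\}) \subseteq \cI(\fS) + E_T^\cT \oF(T)$.

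For the reverse inclusion, note that $\cI(\fS) \subseteq \cI(\fS \setminus \{T\})$ is immediate from the definition, and the maximality observation also gives $E_T^\cT \oF(T) \subseteq \cI(\fS \setminus \{T\})$ since $\tr_F^\cT E_T^\cT = 0$ for every $F \in \fS \setminus \{T\}$. Finally, to see the sum is direct, suppose $E_T^\cT \psi \in \cI(\fS)$ with $\psi \in \oF(T)$. Applying $\tr_T^\cT$ and using that $\tr_T^\cT E_T^\cT$ is the inclusion yields $\psi = \tr_T^\cT E_T^\cT \psi = 0$, hence $E_T^\cT \psi = 0$.

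None of the steps are truly hard; the only subtle point is the bookkeeping around maximality and the lower-subset property, which is what lets us invoke the vanishing $\tr_F^\cT E_T^\cT = 0$ precisely for the $F$'s we need. The rest is just plugging in the defining properties of a consistent family of extension operators from Definition~\ref{def:consistent}, as extended to $\hat\fT$ in Proposition~\ref{prop:consistenthat}.
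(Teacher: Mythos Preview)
Your proof is correct and follows essentially the same approach as the paper: both use the explicit splitting $\phi = (\phi - E_T^\cT \tr_T^\cT \phi) + E_T^\cT \tr_T^\cT \phi$, verify $\tr_T^\cT\phi\in\oF(T)$ via the lower-set property, use maximality of $T$ to get $\tr_F^\cT E_T^\cT = 0$ for $F\in\fS\setminus\{T\}$, and establish directness by applying $\tr_T^\cT$. The only difference is the order of presentation.
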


\begin{proof}
  First, we have that $\cI(\fS)\subseteq\cI(\fS\setminus\{T\})$ by definition. Meanwhile, we have $E_T^\cT\oF(T)\subseteq\cI(\fS\setminus\{T\})$ because $T$ is a maximal element, so if $F\in\fS\setminus\{T\}$ then $T\not\le F$, and so $\tr_F^\cT E_T^\cT=E_T^F=0$.

  Next, assume that $\phi$ is in both $\cI(\fS)$ and $E_T^\cT\oF(T)$. Then, $\tr^\cT_T\phi=0$ because $T\in\fS$, and $\phi=E_T^\cT\psi$ for some $\psi\in\oF(T)$. We conclude that $0=\tr^\cT_TE_T^\cT\psi=E^T_T\psi=\psi$, so $\phi=0$.

  Finally, let $\phi\in\cI(\fS\setminus\{T\})$. Let $\psi=\tr^\cT_T\phi$. Observe that $\psi\in\oF(T)$ because if $F<T$ then $F\in\fS\setminus\{T\}$ so $\tr^T_F\psi=\tr^\cT_F\phi=0$. So now let $\phi'=\phi-E^\cT_T\psi$, and it remains to show that $\phi'\in\cI(\fS)$. Since we already know that $\phi'\in\cI(\fS\setminus\{T\})$, we just need to observe that $\tr_T^\cT\phi'=\tr_T^\cT\phi-\tr_T^\cT E^\cT_T\psi=\psi-\psi=0$.
\end{proof}

\begin{theorem}\label{thm:decomp}
    Let $\hat\fT$ be a finite partially ordered set with a top element $\cT$, and let $\cF$ be a function space on $\hat\fT$. Assume that we have a consistent family of extension operators on $\hat\fT$. Then
  \begin{equation*}
    \cF(\cT)=\bigoplus_{F\in\hat\fT}E_F^\cT\oF(F).
  \end{equation*}
\end{theorem}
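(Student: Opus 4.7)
The plan is to use Lemma~\ref{lem:induct} as an inductive step, peeling off elements of $\hat\fT$ one at a time to convert $\cI(\hat\fT)$ gradually into $\cI(\emptyset)$, while accumulating the direct summands $E_F^\cT\oF(F)$ as we go.

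First, I would record the two ``boundary'' cases. Since the top element $\cT$ lies in $\hat\fT$, any $\phi\in\cI(\hat\fT)$ satisfies $\phi=\tr^\cT_\cT\phi=0$, so $\cI(\hat\fT)=0$. On the other end, the vanishing condition in Definition~\ref{def:ci} is vacuous when $\fS=\emptyset$, so $\cI(\emptyset)=\cF(\cT)$.

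Next, using the finiteness of $\hat\fT$, I would enumerate its elements as $T_1,T_2,\dots,T_n$ so that each $T_i$ is a maximal element of $\hat\fT\setminus\{T_1,\dots,T_{i-1}\}$; such an order exists for any finite poset by repeatedly selecting a maximal element, and necessarily $T_1=\cT$. Setting $\fS_i:=\hat\fT\setminus\{T_1,\dots,T_i\}$, each $\fS_i$ is a lower subset, and $T_i$ is maximal in $\fS_{i-1}$ by construction. Applying Lemma~\ref{lem:induct} $n$ times yields
\begin{equation*}
  \cF(\cT)=\cI(\emptyset)=\cI(\fS_n)=\cI(\fS_{n-1})\oplus E_{T_n}^\cT\oF(T_n)=\cdots=\cI(\hat\fT)\oplus\bigoplus_{i=1}^{n}E_{T_i}^\cT\oF(T_i),
\end{equation*}
and since $\cI(\hat\fT)=0$ and the $T_i$ exhaust $\hat\fT$, the right-hand side collapses to $\bigoplus_{F\in\hat\fT}E_F^\cT\oF(F)$.

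I do not anticipate a genuine obstacle here: Lemma~\ref{lem:induct} already contains the real content (both the independence and the spanning arguments at each step), and the only task left is the telescoping induction. The one subtlety to flag is making clear that a valid enumeration of the $T_i$ exists, that each intermediate $\fS_i$ remains a lower subset (immediate from the maximality of the element being removed), and that the order in which one peels elements does not affect the final direct-sum decomposition.
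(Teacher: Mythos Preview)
Your proposal is correct and follows essentially the same approach as the paper: both arguments use Lemma~\ref{lem:induct} as the inductive step to pass from $\cI(\hat\fT)=0$ to $\cI(\emptyset)=\cF(\cT)$, accumulating one summand $E_{T_i}^\cT\oF(T_i)$ at each step. The paper phrases the induction as proving $\cI(\fS)=\bigoplus_{F\notin\fS}E_F^\cT\oF(F)$ for all lower sets $\fS$ (inducting on $\lvert\hat\fT\setminus\fS\rvert$), whereas you fix a linear extension up front and telescope, but these are the same argument in different clothing.
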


\begin{proof}
  We induct on the statement that
  \begin{equation*}
    \cI(\fS)=\bigoplus_{F\notin\fS}E_F^\cT\oF(F)
  \end{equation*}
  for all lower sets $\fS$. The base case is $\fS=\hat\fT$, in which case $\cI(\fS)=0$ because the condition $\tr_\cT^\cT\phi=0$ yields $\phi=0$. Meanwhile, $\bigoplus_{F\notin\fS}$ is an empty sum. Our desired claim is given by $\fS=\emptyset$, so we induct on the number of elements not in $\fS$.

  Given a lower set $\fS'$ that is not all of $\hat\fT$, because $\hat\fT$ is finite, we can find a minimal element $T$ in the complement of $\fS'$. Then $\fS:=\fS'\sqcup\{T\}$ is also a lower set, with more elements. The inductive hypothesis tells us that $\cI(\fS)=\bigoplus_{F\notin\fS}E_F^\cT\oF(F)$, and Lemma~\ref{lem:induct} tells us that $\cI(\fS')=\cI(\fS)\oplus E_T^\cT\oF(T)$, so $\cI(\fS')=\bigoplus_{F\notin\fS'}E_F^\cT\oF(F)$, as desired.
\end{proof}

\begin{corollary}\label{cor:globaldecomp}
  Let $\cF$ be a function space on a finite partially ordered set $\fT$, and let $\cF(\cT)$ be the corresponding global function space. Assume that we have local extension operators $E^\cT_F\colon\oF(F)\to\cF(\cT)$ for every $F\in\fT$. Then
  \begin{equation*}
    \cF(\cT)=\bigoplus_{F\in\fT}E_F^\cT\oF(F).
  \end{equation*}
\end{corollary}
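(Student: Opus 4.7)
The plan is to reduce the corollary to Theorem~\ref{thm:decomp} by passing from $\fT$ to the extended partially ordered set $\hat\fT:=\fT\sqcup\{\cT\}$. Since all the technical content has already been assembled in the preceding propositions, the proof should really just be a matter of chaining them together.

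First I would invoke Proposition~\ref{prop:functionhat} to view $\cF$ as a function space on $\hat\fT$, with the global function space $\cF(\cT)$ playing the role of the vector space at the new top element and the projections $\tr^\cT_F$ playing the role of the new trace maps. This proposition also gives $\oF(\cT)=0$, which will let me discard the $F=\cT$ term at the end. Next I would apply Proposition~\ref{prop:localconsistent} to convert the given local extension operators $E^\cT_F$ on $\fT$ into a consistent family of extension operators $\{E^F_K\}_{K,F\in\fT}$. Then Proposition~\ref{prop:consistenthat} assembles the $E^F_K$ together with the $E^\cT_K$ into a consistent family of extension operators on the extended poset $\hat\fT$.

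At this point the hypotheses of Theorem~\ref{thm:decomp} are satisfied for $\hat\fT$: it is finite (since $\fT$ is), it has a top element, and we have produced a consistent family of extension operators on it. Applying the theorem directly yields
\begin{equation*}
  \cF(\cT)=\bigoplus_{F\in\hat\fT}E_F^\cT\oF(F).
\end{equation*}
The summand indexed by $F=\cT$ is zero because $\oF(\cT)=0$, so the sum collapses to one indexed by $F\in\fT$, which is precisely the claim.

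There is no substantive obstacle here; the only thing to be slightly careful about is not to double-count the top element or to mistakenly require extension operators at $\cT$ itself as input. Proposition~\ref{prop:consistenthat} already handles this by observing that $E_\cT^F=0$ for all $F$, so no additional data beyond the $E^\cT_F$ given in the hypothesis is needed to invoke Theorem~\ref{thm:decomp}.
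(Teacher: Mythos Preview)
Your proposal is correct and mirrors the paper's own proof: pass to $\hat\fT=\fT\sqcup\{\cT\}$ via Propositions~\ref{prop:functionhat} and~\ref{prop:consistenthat}, apply Theorem~\ref{thm:decomp}, and drop the $F=\cT$ summand using $\oF(\cT)=0$. The only difference is that you name Proposition~\ref{prop:localconsistent} explicitly as an intermediate step, whereas the paper folds it into the reference to Proposition~\ref{prop:consistenthat}.
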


\begin{proof}
  We obtain a function space and a consistent family of extension operators on $\hat\fT:=\fT\sqcup\{\cT\}$ as before (Propositions~\ref{prop:functionhat} and \ref{prop:consistenthat}). The result follows from Theorem~\ref{thm:decomp} and the fact that $\oF(\cT)=0$.
\end{proof}

\begin{corollary}\label{cor:localdecomp}
  Let $\cF$ be a function space on a finite partially ordered set $\fT$. Assume that we have local extension operators $E^\cT_F\colon\oF(F)\to\cF(\cT)$ for every $F\in\fT$. Then, for every $T\in\fT$, we have
  \begin{equation*}
    \cF(T)=\bigoplus_{F\le T}E_F^T\oF(F).
  \end{equation*}
\end{corollary}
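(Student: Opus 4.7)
The plan is to apply Theorem~\ref{thm:decomp} to the down-set $\fT_{\le T}:=\Set{F\in\fT|F\le T}$, with $T$ playing the role of the top element. First, using Propositions~\ref{prop:functionhat} and \ref{prop:consistenthat}, I would upgrade the given data to a function space and a consistent family of extension operators on the enlarged poset $\hat\fT=\fT\sqcup\{\cT\}$. From the consistent family $E_K^F$ on $\hat\fT$, I would then restrict attention to pairs $K,F\in\fT_{\le T}$, obtaining operators $E_K^F\colon\oF(K)\to\cF(F)$ with $T$ as the new top. The three conditions of Definition~\ref{def:consistent} are inherited verbatim, because each one is an equation involving only the elements $K,F,G$ appearing in its statement, and restricting to a down-set closed under the relation $\le$ loses none of those relations.

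Next, I would check the small bookkeeping point that the space $\oF(F)$ is computed identically in $\fT$ and in $\fT_{\le T}$: if $F\le T$ and $K<F$, then automatically $K\le T$, so no proper subfaces are lost when passing to the sub-poset. Consequently the hypotheses of Theorem~\ref{thm:decomp} are satisfied for the finite poset $\fT_{\le T}$ with top element $T$, yielding
\begin{equation*}
  \cF(T)=\bigoplus_{F\in\fT_{\le T}}E_F^T\oF(F)=\bigoplus_{F\le T}E_F^T\oF(F),
\end{equation*}
where $E_F^T=\tr_T^\cT E_F^\cT$ is the extension operator supplied by Proposition~\ref{prop:localconsistent}.

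There is no real obstacle to this argument; the corollary is essentially Theorem~\ref{thm:decomp} applied to the principal down-set at $T$. The only thing meriting explicit mention is the verification that the sub-poset $\fT_{\le T}$ inherits both the function-space structure and the consistent family, both of which are immediate from the fact that $\fT_{\le T}$ is downward closed.
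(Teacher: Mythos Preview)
Your proposal is correct and follows essentially the same route as the paper: restrict to the principal down-set $\Delta(T)=\fT_{\le T}$, observe that the function space and the consistent family of extension operators (obtained via Proposition~\ref{prop:localconsistent}) restrict to $\Delta(T)$, and apply Theorem~\ref{thm:decomp} with $T$ as the top element. The paper's proof is terser and goes directly through Proposition~\ref{prop:localconsistent} rather than through $\hat\fT$, but the substance is identical; your additional bookkeeping remark that $\oF(F)$ is unchanged under passage to the down-set is a helpful clarification.
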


\begin{proof}
  Let $\Delta(T)=\Set{F\in\fT| F\le T}$. Then $\Delta(T)$ is a finite partially ordered set with top element $T$, and $\cF$ restricted to $\Delta(T)$ is a function space. By Proposition~\ref{prop:localconsistent}, we have a consistent family of extension operators on $\fT$, which we can restrict to a consistent family of extension operators on $\Delta(T)$. The claim then follows from applying Theorem~\ref{thm:decomp} to $\Delta(T)$.
\end{proof}

For readers interested in the comparison between our definition of consistent extension operators and the definition in \cite[Equation~(4.4)]{afw09}, we now show that, via the geometric decomposition, a family of consistent extension operators in the sense of this paper yields a family of consistent extension operators in the sense of \cite{afw09}.

\begin{corollary}\label{cor:afw}
  Let $\cF$ be a function space on a finite partially ordered set $\fT$, and assume that we have a consistent family of extension operators $E_G^F\colon\oF(G)\to\cF(F)$. Then we can extend $E_G^F$ to a linear map on the full space $\cF(G)$ by defining
  \begin{equation}\label{eq:extendextension}
    E_G^F(E_K^G\psi):=E_K^F\psi
  \end{equation}
  for every $K\le G$ and $\psi\in\oF(K)$.

  These extension operators satisfy the properties that
  \begin{itemize}
  \item $\tr^F_GE^F_G\colon\cF(G)\to\cF(G)$ is the identity whenever $G\le F$,
  \item $\tr_G^TE_F^T=E^G_K\tr_K^F\colon\cF(F)\to\cF(G)$ whenever $K\le F,G\le T$.
  \end{itemize}
\end{corollary}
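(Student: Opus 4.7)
The plan relies at each stage on the direct sum decomposition from Corollary~\ref{cor:localdecomp}. The proof proceeds in three steps: establishing well-definedness of the extended $E_G^F$, and then verifying each of the two listed properties in turn.

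For well-definedness, I would apply Corollary~\ref{cor:localdecomp} to obtain $\cF(G)=\bigoplus_{K\le G}E_K^G\oF(K)$. Since $\tr_K^G E_K^G$ is the inclusion, each $E_K^G$ is injective on $\oF(K)$, so every $\phi\in\cF(G)$ has a unique representation $\phi=\sum_{K\le G}E_K^G\psi_K$ with $\psi_K\in\oF(K)$. Formula~\eqref{eq:extendextension} then extends uniquely by linearity to the map $E_G^F\phi:=\sum_{K\le G}E_K^F\psi_K$, and this agrees with the original $E_G^F$ on $\oF(G)$ because the $K=G$ summand corresponds to $\oF(G)$ and $E_G^G$ is the inclusion.

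For the trace identity $\tr_G^F E_G^F=\mathrm{id}$ when $G\le F$, I would take $\phi=\sum_{K\le G}E_K^G\psi_K$, apply the extended $E_G^F$ summand-by-summand, then $\tr_G^F$, and use the consistency $\tr_G^F E_K^F=E_K^G$ (valid since $K\le G\le F$) to recover $\sum_{K\le G}E_K^G\psi_K=\phi$.

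For the commuting-square property $\tr_G^T E_F^T=E_K^G\tr_K^F$, the strategy is to verify the identity on each summand $E_{K'}^F\oF(K')$ of the decomposition of $\cF(F)$ and then extend by linearity. On $\phi=E_{K'}^F\psi$, the left side reduces via the definition of the extension to $\tr_G^T E_{K'}^T\psi$ and then via consistency $\tr_G^T E_{K'}^T=E_{K'}^G$ (using $G\le T$) to $E_{K'}^G\psi$; the right side reduces via consistency $\tr_K^F E_{K'}^F=E_{K'}^K$ (using $K\le F$) and then via the extended definition $E_K^G E_{K'}^K\psi=E_{K'}^G\psi$ to the same expression. The main obstacle—the step I expect to require the most care—is reconciling the vanishing patterns of the two sides: the left-hand side annihilates summands with $K'\not\le G$, while the right-hand side annihilates summands with $K'\not\le K$, because $E_{K'}^K=0$ unless $K'\le K$. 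For the summand-wise identity to aggregate to an equation of maps $\cF(F)\to\cF(G)$, these two vanishing conditions must match on all $K'\le F$, which is exactly the natural meet condition identifying $K$ among the common lower bounds of $F$ and $G$; this is the conceptual content of the hypothesis and the place where I would check the bookkeeping most carefully.
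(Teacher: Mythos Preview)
Your approach coincides with the paper's: both invoke the decomposition of Corollary~\ref{cor:localdecomp} to define the extended $E_G^F$, and both verify the two properties summand by summand. For the first property the paper computes $\tr_G^F E_G^F(E_K^G\psi)=\tr_G^F E_K^F\psi=E_K^G\psi$, exactly as you propose. For the second property the paper takes $L\le F$, $\psi\in\oF(L)$, and writes
\[
\tr_G^T E_F^T E_L^F\psi=\tr_G^T E_L^T\psi=E_L^G\psi,\qquad
E_K^G\tr_K^F E_L^F\psi=E_K^G E_L^K\psi=E_L^G\psi,
\]
which is precisely your computation with $K'$ renamed $L$.

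You are in fact more careful than the paper here. The final equality $E_K^G E_L^K\psi=E_L^G\psi$ uses the extended definition \eqref{eq:extendextension}, which is stated only for $L\le K$; when $L\not\le K$ one has $E_L^K\psi=0$ and hence the right-hand expression vanishes, whereas $E_L^G\psi$ need not vanish if $L\le G$ (indeed $E_L^G$ is injective on $\oF(L)$). The paper's proof does not comment on this. Your observation that equality on every summand $L\le F$ requires that any common lower bound $L\le F,G$ already satisfy $L\le K$ is correct, and this is exactly the condition that $K$ be the meet $F\wedge G$---which is how the corresponding identity appears in \cite{afw09}, where $K=F\cap G$. So the literal hypothesis ``$K\le F,G\le T$'' in a general poset does not by itself force the identity; your instinct to flag this step is well placed, though it is a subtlety in the \emph{statement} rather than a defect in your argument.
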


\begin{proof}
  We first check that map $E_G^F$ on the full space $\cF(G)$ is well-defined. Since $\cF(G)=\bigoplus_{K\le G}E_K^G\oF(K)$, defining an operator on $\cF(G)$ is equivalent to defining the operator on each summand $E_K^G\oF(K)$. This definition extends the original definition because, if $K=G$, then $E^G_K\psi=\psi$, so both sides of \eqref{eq:extendextension} are the same.

  We now verify the claimed properties. For $G\le F$, we have $\tr^F_GE^F_G(E_K^G\psi)=\tr^F_GE_K^F\psi=E_K^G\psi$ for every $K\le G$ and $\psi\in\oF(K)$.

  For $K\le F,G\le T$, we must check $\tr^T_GE^T_FE_L^F\psi=E^G_K\tr_K^FE_L^F\psi$ for every $L\le F$ and $\psi\in\oF(L)$. The left-hand side is $\tr_G^TE_F^TE_L^F\psi=\tr_G^TE_L^T\psi=E_L^G\psi$. The right-hand side is $E_K^G\tr_K^FE_L^F\psi=E_K^GE_L^K\psi=E_L^G\psi$.
\end{proof}

\subsection{Dual decomposition}
We now prove the dual version of Theorem~\ref{thm:decomp} for the degrees of freedom, that is, a decomposition of the dual space $\cF(\cT)^*$ in terms of images of $\tr^*$.

We will need one additional ingredient. Observe that the inclusion $\oF(F)\to\cF(F)$ yields a corresponding surjection $\cF(F)^*\to\oF(F)^*$. We will need to arbitrarily choose a complement of the kernel of this map, which we will denote $\oF(F)^\dagger$. In other words, $\oF(F)^\dagger$ is a subspace of $\cF(F)^*$ such that the restriction of the surjection $\cF(F)^*\to\oF(F)^*$ to $\oF(F)^\dagger\to\oF(F)^*$ is an isomorphism. We express this property more explicitly in the following definition.

\begin{definition}\label{def:dagger}
  Let $\cF$ be a function space on a partially ordered set $\fT$. For $F\in\fT$, let $\oF(F)^\dagger$ denote a subspace of $\cF(F)^*$ satisfying the properties that
  \begin{itemize}
  \item if $\alpha\in\oF(F)^*$, there exists an $\beta\in\oF(F)^\dagger$ such that $\beta(\phi)=\alpha(\phi)$ for every $\phi\in\oF(F)$, and
  \item if $\beta\in\oF(F)^\dagger$ and $\beta(\phi)=0$ for all $\phi\in\oF(F)$, then $\beta=0$.
  \end{itemize}
\end{definition}

Note that, if $\cF(F)$ is a Hilbert space, then one option is to simply take $\oF(F)^\dagger$ to be the linear functionals on $\cF(F)$ given by pairing with elements of $\oF(F)$. Note also that, if $\cF(F)$ is only a Banach space, then such a complement of the kernel need not exist. Regardless, if we have extension operators, then another option is to use the geometric decomposition to produce a projection $\cF(F)\to\oF(F)$ and then take $\oF(F)^\dagger$ to be the image of the dual map $\oF(F)^*\to\cF(F)^*$. We express these ideas in the following two propositions.

\begin{proposition}
  Let $\cF$ be a function space on a partially ordered set $\fT$. Assume that each $\cF(F)$ is a Hilbert space. Let $\oF(F)^\dagger$ be the set of all linear functionals on $\cF(F)$ of the form $\langle\psi,\cdot\rangle$, where $\psi\in\oF(F)$. Then $\oF(F)^\dagger$ satisfies Definition~\ref{def:dagger}.
\end{proposition}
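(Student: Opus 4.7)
The plan is to verify each of the two bullet points of Definition~\ref{def:dagger} directly, using the Riesz representation theorem as the main tool. Throughout, I will treat $\oF(F)$ as a closed subspace of the Hilbert space $\cF(F)$, so that it inherits its own Hilbert space structure from the restricted inner product. (Without closedness, the first bullet could fail, because an arbitrary continuous $\alpha\in\oF(F)^*$ need not be represented by an element of $\oF(F)$ itself.)

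For the first property, I would take $\alpha\in\oF(F)^*$ and apply the Riesz representation theorem to $\alpha$ regarded as a continuous functional on the Hilbert space $\oF(F)$. This produces a unique $\psi\in\oF(F)$ such that $\alpha(\phi)=\langle\psi,\phi\rangle$ for every $\phi\in\oF(F)$. Defining $\beta:=\langle\psi,\cdot\rangle$ as a functional on the full space $\cF(F)$ places $\beta$ in $\oF(F)^\dagger$ by construction, and its restriction to $\oF(F)$ agrees with $\alpha$.

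For the second property, suppose $\beta\in\oF(F)^\dagger$ vanishes on $\oF(F)$. By definition, $\beta=\langle\psi,\cdot\rangle$ for some $\psi\in\oF(F)$. Evaluating at $\phi=\psi\in\oF(F)$ gives $0=\beta(\psi)=\langle\psi,\psi\rangle=\|\psi\|^2$, so $\psi=0$, and hence $\beta=0$ as a functional on all of $\cF(F)$.

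The main subtlety, and arguably the only nontrivial point, is the implicit assumption that $\oF(F)$ is a closed subspace of $\cF(F)$, so that Riesz applies within $\oF(F)$. In typical finite-element examples $\cF(F)$ is finite-dimensional and this is automatic; in the infinite-dimensional case it is a regularity statement about the trace maps $\tr_K^F$ (continuity would suffice, since $\oF(F)$ is then an intersection of kernels of continuous maps). Assuming this, the proof is essentially just Riesz plus positive-definiteness, and no further ideas are required.
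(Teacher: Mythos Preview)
Your proof is correct and follows essentially the same approach as the paper: Riesz representation on $\oF(F)$ for the first bullet, and positive-definiteness of the inner product for the second. Your discussion of the closedness hypothesis is a useful elaboration; the paper simply asserts that $\oF(F)$ is a closed subspace without further comment.
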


\begin{proof}
  Since $\oF(F)$ is a closed subspace of $\cF(F)$, it is also a Hilbert space. Hence, if $\alpha\in\oF(F)^*$, then, by the Riesz representation theorem, there exists a $\psi\in\oF(F)$ such that $\alpha(\phi)=\langle\psi,\phi\rangle$ for every $\phi\in\oF(F)$. Next, if $\psi\in\oF(F)$ and $\langle\psi,\phi\rangle=0$ for all $\phi\in\oF(F)$, then $\psi=0$.
\end{proof}

\begin{proposition}
  Let $\cF$ be a function space on a partially ordered set $\fT$. Assume that we have local extension operators $E_F^\cT\colon\oF(F)\to\cF(\cT)$ for every $F\in\fT$, so we have a geometric decomposition of $\cF(F)$ by Corollary~\ref{cor:localdecomp}, which, in particular, yields a projection $\pi\colon\cF(F)\to E_F^F\oF(F)=\oF(F)$. Let $\oF(F)^\dagger$ be the image of the dual map $\pi^*\colon\oF(F)^*\to\cF(F)^*$. Then $\oF(F)^\dagger$ satisfies Definition~\ref{def:dagger}.
\end{proposition}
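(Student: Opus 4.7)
The plan is to verify the two bullet points of Definition~\ref{def:dagger} by direct computation, leveraging the single observation that $\pi$ restricted to $\oF(F)$ is the identity on $\oF(F)$. To see this, note that the geometric decomposition from Corollary~\ref{cor:localdecomp} reads $\cF(F)=\bigoplus_{K\le F}E_K^F\oF(K)$, and the $K=F$ summand is $E_F^F\oF(F)=\oF(F)$, since $E_F^F$ is the inclusion. The projection $\pi$ is projection onto precisely this summand, so every $\phi\in\oF(F)$ satisfies $\pi\phi=\phi$.

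Given this, I would verify the first property as follows. Let $\alpha\in\oF(F)^*$ and set $\beta:=\pi^*\alpha=\alpha\circ\pi\in\cF(F)^*$. By the definition of $\oF(F)^\dagger$ as the image of $\pi^*$, we have $\beta\in\oF(F)^\dagger$. For any $\phi\in\oF(F)$, the identity $\pi\phi=\phi$ gives $\beta(\phi)=\alpha(\pi\phi)=\alpha(\phi)$, which is exactly the required agreement.

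For the second property, suppose $\beta\in\oF(F)^\dagger$ and $\beta(\phi)=0$ for every $\phi\in\oF(F)$. Write $\beta=\pi^*\alpha$ for some $\alpha\in\oF(F)^*$. Then $\alpha(\phi)=\alpha(\pi\phi)=\beta(\phi)=0$ for every $\phi\in\oF(F)$, so $\alpha=0$ as a functional on $\oF(F)$, and therefore $\beta=\pi^*\alpha=0$. There is no real obstacle in this argument: the content of the proposition is essentially that $\pi^*$ is a right inverse to the restriction map $\cF(F)^*\to\oF(F)^*$, and the only thing that needs care is confirming that $\pi$ acts as the identity on $\oF(F)$, which is built into the geometric decomposition together with the fact that $E_F^F$ is the inclusion.
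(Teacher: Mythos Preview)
Your proof is correct and follows essentially the same argument as the paper's: set $\beta=\pi^*\alpha$, use $\pi\phi=\phi$ for $\phi\in\oF(F)$ to verify both properties, and conclude $\alpha=0\Rightarrow\beta=0$ for the second. Your added justification that $\pi$ restricts to the identity on $\oF(F)$ (via the $K=F$ summand and $E_F^F$ being the inclusion) is a welcome clarification of a fact the paper simply asserts.
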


\begin{proof}
  If $\alpha\in\oF(F)^*$, let $\beta=\pi^*\alpha\in\oF(F)^\dagger$, so $\beta(\phi)=\alpha(\pi\phi)=\alpha(\phi)$ for every $\phi\in\oF(F)$ because $\pi\phi=\phi$. Next, if $\beta=\pi^*\alpha$ for $\alpha\in\oF(F)^*$ and $\beta(\phi)=0$ for all $\phi\in\oF(F)$, then $\alpha(\phi)=\alpha(\pi\phi)=\beta(\phi)=0$ for all $\phi\in\oF(F)$, so $\alpha=0$ and hence $\beta=0$.
\end{proof}

We now proceed with the decomposition of the dual space, following very similar steps to the proof of Theorem~\ref{thm:decomp}, except that, as we will see, the induction goes the other way.

\begin{definition}
  Let $\hat\fT$ be a partially ordered set with a top element denoted $\cT$, and let $\cF$ be a function space on $\hat\fT$. Let $\fS$ be a lower subset of $\hat\fT$. Let $\cC(\fS)$ denote those functionals in the dual $\cF(\cT)^*$ that vanish on $\cI(\fS)$ (see Definition~\ref{def:ci}), that is,
  \begin{equation*}
    \cC(\fS):=\Set{\alpha\in\cF(\cT)^*|\alpha(\phi)=0\text{ for all }\phi\in\cI(\fS)}.
  \end{equation*}
\end{definition}

\begin{lemma}\label{lem:inductdual}
  Let $\hat\fT$ be a partially ordered set with top element $\cT$, and let $\cF$ be a function space on $\hat\fT$, and assume that we have chosen spaces $\oF(F)^\dagger$ as defined above. Assume that we have a consistent family of extension operators on $\hat\fT$. Let $\fS$ be a lower subset of $\hat\fT$, and let $T$ be a maximal element of $\fS$. Then we have
  \begin{equation*}
    \cC(\fS)=\cC(\fS\setminus\{T\})\oplus\left(\tr^\cT_T\right)^*\oF(T)^\dagger.
  \end{equation*}  
\end{lemma}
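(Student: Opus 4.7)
The plan is to mirror the three-step structure of Lemma~\ref{lem:induct}: show both summands on the right sit in $\cC(\fS)$, show their intersection is trivial, and show that every element of $\cC(\fS)$ decomposes as such a sum. The key input throughout is Lemma~\ref{lem:induct} itself, which lets me rewrite $\cI(\fS\setminus\{T\})=\cI(\fS)\oplus E_T^\cT\oF(T)$, together with the two defining properties of $\oF(T)^\dagger$ and the identity $\tr^\cT_T E_T^\cT=\mathrm{id}_{\oF(T)}$ from Definition~\ref{def:consistent}.

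For the containment step, $\cC(\fS\setminus\{T\})\subseteq\cC(\fS)$ is immediate from $\cI(\fS)\subseteq\cI(\fS\setminus\{T\})$, and $(\tr^\cT_T)^*\oF(T)^\dagger\subseteq\cC(\fS)$ holds because $T\in\fS$ forces $\tr^\cT_T\phi=0$ for every $\phi\in\cI(\fS)$. For the triviality step, suppose $\gamma=(\tr^\cT_T)^*\beta$ also lies in $\cC(\fS\setminus\{T\})$. Since Lemma~\ref{lem:induct} gives $E_T^\cT\oF(T)\subseteq\cI(\fS\setminus\{T\})$, the functional $\gamma$ must vanish on every $E_T^\cT\psi$; expanding and using $\tr^\cT_T E_T^\cT=\mathrm{id}_{\oF(T)}$ yields $\beta(\psi)=0$ for all $\psi\in\oF(T)$, and the second property of $\oF(T)^\dagger$ then forces $\beta=0$.

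The spanning step is where the real content lies. Given $\gamma\in\cC(\fS)$, I would first build $\alpha\in\oF(T)^*$ via $\alpha(\psi):=\gamma(E_T^\cT\psi)$, then invoke the first property of $\oF(T)^\dagger$ to choose a lift $\beta\in\oF(T)^\dagger$ with $\beta(\psi)=\alpha(\psi)$ for all $\psi\in\oF(T)$. The proposed decomposition is $\gamma=\gamma'+(\tr^\cT_T)^*\beta$ where $\gamma':=\gamma-(\tr^\cT_T)^*\beta$, and the task is to verify $\gamma'\in\cC(\fS\setminus\{T\})$. By Lemma~\ref{lem:induct}, this reduces to checking $\gamma'$ vanishes on the two summands: on $\cI(\fS)$, the first term vanishes since $\gamma\in\cC(\fS)$ and the second vanishes since $T\in\fS$; on $E_T^\cT\oF(T)$, the two terms cancel thanks to $\tr^\cT_T E_T^\cT=\mathrm{id}_{\oF(T)}$ and the defining equation of $\beta$.

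The main conceptual obstacle is exactly what motivated introducing the auxiliary spaces $\oF(F)^\dagger$ in the first place: there is no canonical way to extend a functional on $\oF(T)$ to one on $\cF(T)$. The first property of $\oF(T)^\dagger$ provides the existence of the lift needed in the spanning step, while the second property provides the uniqueness needed in the triviality step; both are used essentially, and no other subtleties arise.
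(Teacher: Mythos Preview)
Your proposal is correct and follows essentially the same three-step argument as the paper's own proof: both use Lemma~\ref{lem:induct} to decompose $\cI(\fS\setminus\{T\})$, both invoke the two defining properties of $\oF(T)^\dagger$ at exactly the same places (existence of the lift in the spanning step, uniqueness in the trivial-intersection step), and both construct the candidate $\beta$ via $\beta(\psi)=\gamma(E_T^\cT\psi)$. The only cosmetic difference is that the paper phrases the construction of $\beta$ as applying the dual map $(E_T^\cT)^*$ and then the isomorphism $\oF(T)^*\cong\oF(T)^\dagger$, whereas you write it out directly; the content is identical.
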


\begin{proof}
  Since $\cI(\fS)\subseteq\cI(\fS\setminus\{T\})$, we have $\cC(\fS\setminus\{T\})\subseteq\cC(\fS)$. Next, in order to show that $\left(\tr_T^\cT\right)^*\oF(T)^\dagger\subseteq\cC(\fS)$, let $\alpha\in\oF(T)^\dagger$ and $\phi\in\cI(\fS)$. Then $\left((\tr^\cT_T)^*\alpha\right)(\phi)=\alpha(\tr^\cT_T\phi)=0$ because $T\in\fS$.

  Now, assume $\alpha$ is in both $\cC(\fS\setminus\{T\})$ and $\left(\tr^\cT_T\right)^*\oF(T)^\dagger$, and we aim to show $\alpha=0$. We have $\alpha(\phi)=0$ for all $\phi\in\cI(\fS\setminus\{T\})$ and $\alpha=\left(\tr^\cT_T\right)^*\beta$ for some $\beta\in\oF(T)^\dagger$, so we aim to show that $\beta=0$. It suffices to show that $\beta(\psi)=0$ for all $\psi\in\oF(T)$. Let $\phi=E_T^\cT\psi$, so $\phi\in\cI(\fS\setminus\{T\})$ by Lemma~\ref{lem:induct}. Thus, $\alpha(\phi)=0$. On the other hand, $\alpha(\phi)=\beta(\tr_T^\cT E_T^\cT\psi)=\beta(\psi)$. Hence, $\beta(\psi)=0$, as desired.

  Finally, let $\alpha\in\cC(\fS)$. We aim to write $\alpha$ in the form $\alpha=\alpha'+\left(\tr_T^\cT\right)^*\beta$, where $\alpha'\in\cC(\fS\setminus\{T\})$ and $\beta\in\oF(T)^\dagger$. Since $E_T^\cT\colon\oF(T)\to\cF(\cT)$, we have a corresponding dual map $\left(E_T^\cT\right)^*\colon\cF(\cT)^*\to\oF(T)^*$, and $\oF(T)^*$ is isomorphic to $\oF(T)^\dagger$. Let $\beta\in\oF(T)^\dagger$ be the image of $\left(E_T^\cT\right)^*\alpha$ under this isomorphism, that is, $\beta(\psi)=\alpha(E_T^\cT\psi)$ for every $\psi\in\oF(T)$. Let $\alpha'=\alpha-\left(\tr_T^\cT\right)^*\beta$. Since we have shown that $\left(\tr_T^\cT\right)^*\oF(T)^\dagger\subseteq\cC(\fS)$, we know that $\alpha'\in\cC(\fS)$. We must show that $\alpha'\in\cC(\fS\setminus\{T\})$, that is, that $\alpha'$ vanishes on $\cI(\fS\setminus\{T\})$. Recalling from Lemma~\ref{lem:induct} that $\cI(\fS\setminus\{T\})=\cI(\fS)\oplus E_T^\cT\oF(T)$, since we already know that $\alpha'$ vanishes on $\cI(\fS)$, it suffices to show that $\alpha'$ vanishes on $E_T^\cT\psi$ for all $\psi\in\oF(T)$. We do so by computing that $\left((\tr_T^\cT\right)^*\beta)(E_T^\cT\psi)=\beta(\tr_T^\cT E_T^\cT\psi)=\beta(\psi)=\alpha(E_T^\cT\psi)$, so $\alpha'(E_T^\cT\psi)=\alpha(E_T^\cT\psi)-\alpha(E_T^\cT\psi)=0$.
\end{proof}

\begin{theorem}\label{thm:dualdecomp}
  Let $\hat\fT$ be a finite partially ordered set with a top element $\cT$, and let $\cF$ be a function space on $\hat\fT$. Assume that we have chosen subspaces $\oF(F)^\dagger$ of $\cF(F)^*$ that are isomorphic to $\oF(F)^*$ as defined in Definition~\ref{def:dagger}. Assume that we have a consistent family of extension operators on $\hat\fT$. Then
  \begin{equation*}
    \cF(\cT)^*=\bigoplus_{F\in\hat\fT}\left(\tr^\cT_F\right)^*\oF(F)^\dagger.
  \end{equation*}
\end{theorem}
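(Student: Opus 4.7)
The plan is to mirror the inductive proof of Theorem~\ref{thm:decomp}, using Lemma~\ref{lem:inductdual} in place of Lemma~\ref{lem:induct} and running the induction in the opposite direction: we grow the lower set $\fS$ from $\emptyset$ rather than shrink it from $\hat\fT$. Specifically, I would prove by induction on $|\fS|$ that
\begin{equation*}
\cC(\fS)=\bigoplus_{F\in\fS}\left(\tr^\cT_F\right)^*\oF(F)^\dagger
\end{equation*}
for every lower subset $\fS\subseteq\hat\fT$; the theorem is the case $\fS=\hat\fT$.

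For the base case $\fS=\emptyset$, the condition defining $\cI(\emptyset)$ is vacuous, so $\cI(\emptyset)=\cF(\cT)$ and hence $\cC(\emptyset)=0$, which matches the empty direct sum on the right. For the inductive step, because $\hat\fT$ is finite any nonempty lower set $\fS$ contains a maximal element $T$, and $\fS':=\fS\setminus\{T\}$ is then a lower set with one fewer element to which the inductive hypothesis applies. Combining the inductive hypothesis with Lemma~\ref{lem:inductdual} gives
\begin{align*}
\cC(\fS) &= \cC(\fS')\oplus\left(\tr^\cT_T\right)^*\oF(T)^\dagger \\
&= \bigoplus_{F\in\fS}\left(\tr^\cT_F\right)^*\oF(F)^\dagger,
\end{align*}
closing the induction.

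To extract the stated theorem, I would then specialize to $\fS=\hat\fT$. Since $\cT\in\hat\fT$ is the top element and $\tr_\cT^\cT$ is the identity, any $\phi\in\cI(\hat\fT)$ satisfies $\phi=\tr_\cT^\cT\phi=0$, so $\cI(\hat\fT)=0$ and therefore $\cC(\hat\fT)=\cF(\cT)^*$. Substituting into the identity proved by induction gives the desired decomposition.

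The main obstacle has already been absorbed into Lemma~\ref{lem:inductdual}, whose proof is where both the extension operator $E_T^\cT$ and the isomorphism $\oF(T)^*\cong\oF(T)^\dagger$ do essential work—the former to realize any functional in $\left(\tr^\cT_T\right)^*\oF(T)^\dagger$ via pairing with $E_T^\cT\psi\in\cI(\fS\setminus\{T\})$, and the latter to produce the required complement. After that lemma, the remainder of the argument is purely organizational: picking a maximal element at each stage and tracking how the summands accumulate as $\fS$ grows, which is routine for a finite poset.
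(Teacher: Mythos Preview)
Your proposal is correct and matches the paper's own proof essentially line for line: both induct on $|\fS|$ starting from the base case $\fS=\emptyset$, use Lemma~\ref{lem:inductdual} for the inductive step after removing a maximal element, and conclude by specializing to $\fS=\hat\fT$. The observation that the induction runs in the opposite direction from Theorem~\ref{thm:decomp} is likewise exactly what the paper notes.
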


\begin{proof}
  We induct on the statement that
  \begin{equation*}
    \cC(\fS)=\bigoplus_{F\in\fS}\left(\tr^\cT_F\right)^*\oF(F)^\dagger.
  \end{equation*}
  for all lower sets $\fS$. The base case is $\fS=\emptyset$, for then $\cI(\fS)=\cF(\cT)$, so $\cC(\fS)=0$. Meanwhile, the right-hand side is an empty sum. Our desired claim is given by $\fS=\hat\fT$, for then $\cI(\fS)=0$, so $\cC(\fS)=\cF(\cT)^*$. So, we induct on the number of elements in $\fS$.

  Given a nonempty lower set $\fS$, because $\fS$ is finite, we can find a maximal element of $\fS$. Then $\fS\setminus\{T\}$ is also a lower set, with fewer elements, so the inductive hypothesis tells us that $\cC(\fS\setminus\{T\})=\bigoplus_{F\in\fS\setminus\{T\}}\left(\tr^\cT_F\right)^*\oF(F)^\dagger$, and Lemma~\ref{lem:inductdual} tells us that $\cC(\fS)=\cC(\fS\setminus\{T\})\oplus\left(\tr^\cT_T\right)^*\oF(T)^\dagger$, so $\cC(\fS)=\bigoplus_{F\in\fS}\left(\tr^\cT_F\right)^*\oF(F)^\dagger$, as desired.
\end{proof}

We now prove corollaries analogous to Corollaries~\ref{cor:globaldecomp} and \ref{cor:localdecomp}. The proofs are also analogous, so we present abbreviated versions.

\begin{corollary}\label{cor:dualdecomp}
  Let $\cF$ be a function space on a finite partially ordered set $\fT$, and let $\cF(\cT)$ be the corresponding global function space. Assume that we have chosen subspaces $\oF(F)^\dagger$ of $\cF(F)^*$ that are isomorphic to $\oF(F)^*$, and assume that we have local extension operators $E_F^\cT\colon\oF(F)\to\cF(\cT)$ for every $F\in\fT$. Then
  \begin{equation*}
    \cF(\cT)^*=\bigoplus_{F\in\fT}\left(\tr^\cT_F\right)^*\oF(F)^\dagger.    
  \end{equation*}
\end{corollary}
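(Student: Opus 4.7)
The plan is to reduce Corollary~\ref{cor:dualdecomp} directly to Theorem~\ref{thm:dualdecomp} by adjoining $\cT$ to $\fT$ as a top element, exactly mirroring the proof of Corollary~\ref{cor:globaldecomp}. First, I would invoke Proposition~\ref{prop:functionhat} to view $\cF$ as a function space on $\hat\fT := \fT \sqcup \{\cT\}$; crucially, this proposition also guarantees that $\oF(\cT) = 0$. Next, I would combine Proposition~\ref{prop:localconsistent} (to turn the given local extension operators into a consistent family on $\fT$) with Proposition~\ref{prop:consistenthat} (to extend this consistent family to all of $\hat\fT$, with $E_\cT^F = 0$).

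With the hypotheses of Theorem~\ref{thm:dualdecomp} now in place on $\hat\fT$, I would apply that theorem to obtain
\begin{equation*}
  \cF(\cT)^* = \bigoplus_{F \in \hat\fT} \left(\tr_F^\cT\right)^* \oF(F)^\dagger.
\end{equation*}
The final step is to discard the $F = \cT$ summand: by the defining property of $\oF(\cT)^\dagger$ in Definition~\ref{def:dagger}, it is isomorphic to $\oF(\cT)^*$, and since $\oF(\cT) = 0$, this summand is zero. Hence the sum over $\hat\fT$ collapses to the sum over $\fT$, giving the claim.

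There is no real obstacle here; every ingredient has already been assembled in the preceding propositions and theorem, and the argument is purely a bookkeeping reduction. The only point that warrants a single sentence of comment is the vanishing of the $F = \cT$ term, which follows from Definition~\ref{def:dagger} applied to the zero space.
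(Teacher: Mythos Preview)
Your proposal is correct and follows essentially the same approach as the paper: adjoin $\cT$ as a top element via Propositions~\ref{prop:functionhat} and~\ref{prop:consistenthat}, apply Theorem~\ref{thm:dualdecomp} to $\hat\fT$, and drop the $F=\cT$ summand since $\oF(\cT)=0$ forces $\oF(\cT)^\dagger=0$. The paper's proof is a one-sentence version of exactly this argument.
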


\begin{proof}
  We define a function space on $\hat\fT:=\fT\sqcup\{\cT\}$ as before, and then apply Theorem~\ref{thm:dualdecomp} to $\hat\fT$, noting that $\oF(\cT)$ is zero, so $\oF(\cT)^\dagger$ is zero as well.
\end{proof}

\begin{corollary}
  Let $\cF$ be a function space on a finite partially ordered set $\fT$. Assume that we have chosen subspaces $\oF(F)^\dagger$ of $\cF(F)^*$ that are isomorphic to $\oF(F)^*$, and assume that we have local extension operators $E_F^\cT\colon\oF(F)\to\cF(\cT)$ for every $F\in\fT$. Then, for every $T\in\fT$, we have
  \begin{equation*}
    \cF(T)^*=\bigoplus_{F\le T}\left(\tr^\cT_F\right)^*\oF(F)^\dagger.    
  \end{equation*}
\end{corollary}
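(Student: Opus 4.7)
The plan is to mirror the proof of Corollary~\ref{cor:localdecomp} but invoking Theorem~\ref{thm:dualdecomp} in place of Theorem~\ref{thm:decomp}. Set $\Delta(T):=\{F\in\fT\mid F\le T\}$; this is a finite partially ordered set with top element $T$, and the restriction of $\cF$ to $\Delta(T)$ is a function space on $\Delta(T)$. I intend to apply Theorem~\ref{thm:dualdecomp} to $\Delta(T)$, with $T$ playing the role of the top element (so the notation $\tr^\cT_F$ in the statement should be read as $\tr^T_F$).

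To do so, I need two ingredients on $\Delta(T)$: a consistent family of extension operators, and subspaces $\oF(F)^\dagger\subseteq\cF(F)^*$ as in Definition~\ref{def:dagger}. For the extension operators, Proposition~\ref{prop:localconsistent} turns the hypothesized local operators $E^\cT_F$ into a consistent family $\{E^F_K\}$ on $\fT$, which I can then restrict to those pairs $K\le F$ with $F\in\Delta(T)$ to obtain a consistent family on $\Delta(T)$. For the $\oF(F)^\dagger$, observe that $\oF(F)$ is defined purely in terms of proper subfaces $K<F$, all of which satisfy $K<F\le T$ and hence already lie in $\Delta(T)$; so $\oF(F)$ is unchanged by the restriction, and the already-chosen $\oF(F)^\dagger$ remains a valid choice of complement.

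Applying Theorem~\ref{thm:dualdecomp} to $\Delta(T)$ then gives
$$\cF(T)^*=\bigoplus_{F\in\Delta(T)}\left(\tr^T_F\right)^*\oF(F)^\dagger=\bigoplus_{F\le T}\left(\tr^T_F\right)^*\oF(F)^\dagger,$$
which is the desired decomposition. There is no real obstacle here: the substantive work has already been carried out in Theorem~\ref{thm:dualdecomp}, and this corollary is just the observation that its hypotheses are inherited by the principal down-set $\Delta(T)$. The only thing worth being careful about is verifying that the restriction of a consistent family really is a consistent family, which is immediate from the definition since all three defining properties only reference pairs $K\le F$ in the relevant poset.
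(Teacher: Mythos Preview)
Your proof is correct and follows exactly the paper's approach: apply Theorem~\ref{thm:dualdecomp} to the down-set $\Delta(T)$, noting that the consistent family and the chosen $\oF(F)^\dagger$ restrict. Your observation that $\tr^\cT_F$ in the statement should be read as $\tr^T_F$ is also apt.
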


\begin{proof}
  We apply Theorem~\ref{thm:dualdecomp} to $\Delta(T)$.
\end{proof}

\section{Simplicial function spaces and triangulations}\label{sec:simplicial}
We now specialize to simplicial meshes $\cT$. We remark that the context can still be quite general: $\cT$ need not be a manifold, could have parts of different dimension, etc.

We begin by defining a simplicial function space, which is defined independently of a mesh. While the definition is quite general, a good example to keep in mind is the finite element exterior calculus space $\cF=\mathcal P_r\Lambda^k$, or, more generally, any affine-invariant space of covariant tensors.

\begin{definition}
  A \emph{simplicial map} between two simplices $K$ and $F$ is an affine function $\Phi\colon K\to F$ that sends vertices to vertices.
\end{definition}

Note that simplicial maps are allowed to send two vertices of $K$ to the same vertex of $F$; they need not be inclusions.

\begin{definition}
  A \emph{simplicial function space} $\cF$ is
  \begin{itemize}
  \item a vector space $\cF(F)$ associated to every simplex $F$, and
  \item a linear map $\Phi^*\colon\cF(F)\to\cF(K)$, called the \emph{pullback}, associated to every simplicial map $\Phi\colon K\to F$,
  \end{itemize}
  satisfying the functoriality properties that
  \begin{itemize}
  \item if $\Phi$ is the identity map on $F$, then $\Phi^*$ is the identity map on $\cF(F)$, and
  \item $(\Psi\circ\Phi)^*=\Phi^*\Psi^*$ whenever we have simplicial maps $\Phi\colon K\to G$ and $\Psi\colon G\to F$.
  \end{itemize}
\end{definition}

In the context of a simplicial mesh, a simplicial function space yields a function space in the sense of Definition~\ref{def:funspace}.

\begin{proposition}
  Let $\cF$ be a simplicial function space, and let $\cT$ be a simplicial complex. Let $\fT$ be the partially ordered set of the vertices, edges, faces, etc., of $\cT$, ordered by inclusion. Then $\cF$ is a function space on $\fT$ in the sense of Definition~\ref{def:funspace}, where the trace maps $\tr_K^F$ are the pullbacks via the inclusion maps $K\hookrightarrow F$.
\end{proposition}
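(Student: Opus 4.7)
The plan is a direct unpacking of the two definitions. First, I would observe that for every pair $K\le F$ in $\fT$, the set-theoretic inclusion $\iota^F_K\colon K\hookrightarrow F$ is itself a simplicial map in the given sense: it is affine, and every vertex of $K$ is (by definition of $K$ being a subsimplex of $F$) a vertex of $F$, which is where $\iota^F_K$ sends it. Hence its pullback $(\iota^F_K)^*$ defines a linear map $\tr^F_K\colon\cF(F)\to\cF(K)$, supplying the data required by Definition~\ref{def:funspace}.

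Next I would verify the two functoriality axioms. For $\tr^F_F$, the inclusion $\iota^F_F$ is the identity simplicial map on $F$, so by the first functoriality property of a simplicial function space its pullback is the identity on $\cF(F)$. For the composition, given $K\le G\le F$, the inclusions $\iota^G_K\colon K\hookrightarrow G$ and $\iota^F_G\colon G\hookrightarrow F$ satisfy $\iota^F_G\circ\iota^G_K=\iota^F_K$ as maps of simplices. Applying the second functoriality property of a simplicial function space yields
\begin{equation*}
  \tr^F_K=(\iota^F_K)^*=(\iota^F_G\circ\iota^G_K)^*=(\iota^G_K)^*(\iota^F_G)^*=\tr^G_K\circ\tr^F_G,
\end{equation*}
which is the required relation.

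There is no real obstacle: the proposition is a bookkeeping translation between the two formalisms, with the only substantive observation being that inclusions of subsimplices are themselves simplicial maps, so that the functoriality of pullbacks specializes to the functoriality of trace maps demanded by Definition~\ref{def:funspace}.
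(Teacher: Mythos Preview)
Your proof is correct and is precisely the comparison of definitions that the paper alludes to; the paper's own proof is simply the one-line remark that the claim follows by comparing the two definitions, and you have spelled out that comparison in full.
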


\begin{proof}
  The claim follows by comparing the two definitions.
\end{proof}

Consequently, we can use all of the notation of the previous section, such as the vanishing trace subspace $\oF(F)$ (Definition~\ref{def:vanishingtrace}), the global function space $\cF(\cT)$ (Definition~\ref{def:globalfunctionspace}), and so forth.

We now extend the notion of simplicial map to triangulations.

\begin{definition}
  Let $\cT$ be a simplicial complex, and let $Q$ be a simplex. A \emph{simplicial map} $\Phi\colon\cT\to Q$ is a piecewise affine map that sends vertices of $\cT$ to vertices of $Q$. By piecewise affine, we mean that $\Phi$ is affine when restricted to any $F\in\fT$.
\end{definition}

Note that, in general, simplicial maps can be defined with a simplicial complex codomain as well, but we will not need this notion in this paper. We now define pullback.

\begin{definition}
  Given a simplicial map $\Phi\colon\cT\to Q$, define $\Phi^*\colon\cF(Q)\to\cF(\cT)$ piecewise, that is, for $\phi\in\cF(Q)$, let
  \begin{equation}\label{eq:pullback}
    \Phi^*\phi:=\left(\left(\Phi\rvert_F\right)^*\phi\right)_{F\in\fT}.
  \end{equation}
\end{definition}

Of course, we need to check that $\Phi^*\phi$ as defined above yields an element of $\cF(\cT)$.

\begin{proposition}
  Equation~\eqref{eq:pullback} defines an element of $\cF(\cT)$ as defined in Definition~\ref{def:globalfunctionspace}.
\end{proposition}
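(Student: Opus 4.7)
The plan is to unwind the definitions and apply functoriality of pullback. An element of $\cF(\cT)$ is a tuple $(\phi_F)_{F\in\fT}$ satisfying the compatibility $\tr^F_K\phi_F=\phi_K$ whenever $K\le F$. So, setting $\phi_F:=(\Phi|_F)^*\phi$, I need to verify two things: first, that each $\Phi|_F$ is actually a simplicial map so that $(\Phi|_F)^*$ is defined; second, that $\tr^F_K(\Phi|_F)^*\phi=(\Phi|_K)^*\phi$ for every $K\le F$ in $\fT$.

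The first point is immediate from the definition of a simplicial map from $\cT$ to $Q$: by hypothesis, $\Phi$ is affine on each $F\in\fT$ and sends vertices of $\cT$ to vertices of $Q$, so in particular $\Phi|_F$ is affine and sends the vertices of $F$ (which are vertices of $\cT$) to vertices of $Q$. Hence $\Phi|_F\colon F\to Q$ is a simplicial map in the sense of the earlier definition, and the pullback $(\Phi|_F)^*\colon\cF(Q)\to\cF(F)$ is defined.

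For the compatibility, let $\iota\colon K\hookrightarrow F$ denote the inclusion, so that by definition $\tr^F_K=\iota^*$. Then $(\Phi|_F)\circ\iota\colon K\to Q$ acts on $x\in K$ by $x\mapsto\Phi|_F(x)=\Phi(x)$, which is exactly $\Phi|_K$. Applying functoriality of pullback for simplicial function spaces, I get
\begin{equation*}
  \tr^F_K(\Phi|_F)^*\phi=\iota^*(\Phi|_F)^*\phi=\bigl((\Phi|_F)\circ\iota\bigr)^*\phi=(\Phi|_K)^*\phi,
\end{equation*}
which is precisely the required compatibility condition.

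There is no real obstacle here; the only subtlety worth noting is to check that $\Phi|_F$ really is a simplicial map and that the inclusion $K\hookrightarrow F$ is too, so that functoriality applies. Once that is observed, the computation is a one-line application of $(\Psi\circ\Phi)^*=\Phi^*\Psi^*$.
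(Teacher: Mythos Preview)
Your proof is correct and follows essentially the same approach as the paper: both reduce the compatibility to the identity $\Phi|_K=(\Phi|_F)\circ\iota$ for $K\le F$ and then apply functoriality of pullback, recalling that the trace is pullback along the inclusion. The paper's proof is slightly terser, omitting the (immediate) verification that $\Phi|_F$ is a simplicial map, but the argument is the same.
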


\begin{proof}
  We must check that $\tr_G^F\left(\Phi\rvert_F\right)^*\phi=\left(\Phi\rvert_G\right)^*\phi$ for $G\le F$, which follows from the fact that $\Phi\rvert_G=\Phi\rvert_F\circ I_G^F$, where $I_G^F$ is the inclusion of $G$ into $F$. Taking pullbacks of both sides and recalling that the trace is the pullback via the inclusion, we obtain our claim.
\end{proof}

We now define two types of standard simplices.

\begin{definition}
  Let $\R^{n+1}$ have coordinates $(\lambda_0,\dotsc,\lambda_n)$, and let $e_0,\dotsc,e_n$ denote the unit coordinate vectors. Let $T^n$ be the simplex with vertices $e_0,\dotsc,e_n$, and let $Q^{n+1}$ be the simplex with vertices $0,e_0,\dotsc,e_n$. In other words,
  \begin{align*}
    T^n&=\Set{(\lambda_0,\dotsc,\lambda_n)|\lambda_i\ge0,\ \lambda_0+\dots+\lambda_n=1},\\
    Q^{n+1}&=\Set{(\lambda_0,\dotsc,\lambda_n)|\lambda_i\ge0,\ \lambda_0+\dots+\lambda_n\le1}.
  \end{align*}
\end{definition}

Note that $T^n$ is a face of $Q^{n+1}$. Since any two simplices of the same dimension are isomorphic, it suffices to study a simplicial function spaces on the $T^n$ (or on the $Q^{n+1}$), so we define extension operators in this context.

\begin{definition}\label{def:simpext}
  Let $\cF$ be a simplicial function space. A \emph{simplicial extension operator} in dimension $n$ is an operator $E_n\colon\oF(T^n)\to\cF(Q^{n+1})$ such that
  \begin{itemize}
  \item $\tr_{T^n}^{Q^{n+1}}E_n\colon\oF(T^n)\to\cF(T^n)$ is the inclusion, and
  \item $\tr_S^{Q^{n+1}}E_n=0$ if $T^n\not\le S$, that is, if $S$ is a face of $Q^{n+1}$ other than $T^n$ or $Q^{n+1}$ itself.
  \end{itemize}
\end{definition}
In other words, as with the other extension operators in this paper, the extensions $E_n\psi$ are required to vanish everywhere they can.

Our main result is that, in the context of a triangulation, the existence of simplicial extension operators is equivalent to the existence of local extension operators, and hence equivalent to the existence of geometric decompositions. Consequently, we can obtain all of the results of this paper for all simplicial meshes at once simply by constructing an $E_n$ for each $n$.
\begin{theorem}\label{thm:simpext}
  Let $\cF$ be a simplicial function space, and let $\cT$ be a simplicial complex, with maximal dimension $n$. Let $\fT$ be partial ordered set of the vertices, edges, faces, etc., of $\cT$. Then $\fT$ has local extension operators $E_K^\cT$ if and only if $\cF$ has simplicial extension operators $E_m$ for every $m<n$.
\end{theorem}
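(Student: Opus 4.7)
The plan is to prove the two directions separately.

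The forward direction ($\Rightarrow$) is essentially routine. Since $\cT$ has maximal dimension $n$, for every $m<n$ it contains at least one $(m+1)$-simplex $S$; picking any $m$-face $K$ of $S$ and any simplicial isomorphism $\sigma\colon S\to Q^{m+1}$ that sends $K$ to $T^m$ (which exists because a simplicial isomorphism is determined by any bijection of vertices), I would define $E_m:=(\sigma^{-1})^*\circ E_K^S\circ(\sigma|_K)^*$, where $E_K^S:=\tr_S^\cT E_K^\cT$ is the operator supplied by Proposition~\ref{prop:localconsistent}. Both conditions in Definition~\ref{def:simpext} then reduce, by functoriality of pullback, to the analogous properties of $E_K^S$: the trace to $T^m$ unwinds through $\sigma|_K$ and its inverse to yield the inclusion, while the trace to a face $R$ of $Q^{m+1}$ with $T^m\not\le R$ factors through $\tr_{\sigma^{-1}(R)}^S E_K^S$, which vanishes because $K\not\le\sigma^{-1}(R)$.

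The reverse direction ($\Leftarrow$) carries the main content. Given $E_m$ for every $m<n$, I would construct, for every simplex $F\in\fT$ and every face $K\le F$, an extension operator $E_K^F\colon\oF(K)\to\cF(F)$ forming a consistent family in the sense of Definition~\ref{def:consistent}; Proposition~\ref{prop:localconsistent} then packages these into the local extension operators $E_K^\cT$. The construction proceeds by outer induction on $k=\dim F$, with the inductive hypothesis supplying, via Corollary~\ref{cor:localdecomp}, geometric decompositions $\cF(G)=\bigoplus_{H\le G}E_H^G\oF(H)$ for every simplex $G$ of dimension less than $k$. Within the inductive step, the case $K=F$ is the identity, and the case where $K$ is a codim-$1$ face of $F$ is handled by transporting $E_{k-1}$ along a simplicial isomorphism $F\cong Q^k$ sending $K$ to $T^{k-1}$; the defining properties of $E_{k-1}$ then immediately give that $\tr_K^F E_K^F$ is the inclusion and $\tr_G^F E_K^F=0$ for every other proper $G<F$.

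The main obstacle is the inner step for $K$ of codimension $j\ge 2$ in $F$. The naive attempt $E_K^F\psi:=E_{F^v}^F(E_K^{F^v}\psi)$, for $v\in F\setminus K$ and $F^v$ the opposite codim-$1$ face, fails because $E_K^{F^v}\psi$ lies in the $K$-summand of the geometric decomposition of $\cF(F^v)$ rather than in $\oF(F^v)$, so it is outside the domain of $E_{F^v}^F$. I would resolve this by an inner induction on the codimension $j$: view $E_K^F\psi$ as the solution of a lifting problem whose prescribed boundary data is $E_K^G\psi$ on faces $G<F$ with $K\le G$ and zero elsewhere (a consistent element of $\cF(\partial F)$ by the outer inductive hypothesis), and build the lift by a telescoping procedure that processes the codim-$1$ faces of $F$ one at a time, using $E_{F^v}^F$ to carry the $\oF(F^v)$-part of the data to $\cF(F)$ and using the inner-inductively defined operators $E_G^F$ for $G$ with codim less than $j$ to handle the remaining summands, so that the residual after these subtractions lies in the image of $E_{F^v}^F$ and the induction closes. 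Once $E_K^F$ is constructed per simplex, consistency across $\cT$ follows automatically: for any two simplices $F_1,F_2\in\fT$ sharing a face $G$, both $E_K^{F_1}$ and $E_K^{F_2}$ restrict to the same $E_K^G$ by the outer induction, so the family assembles globally and yields the desired $E_K^\cT$ via Proposition~\ref{prop:localconsistent}.
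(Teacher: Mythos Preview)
Your forward direction is fine and matches the paper's converse argument. The reverse direction, however, has a genuine gap in the inner induction on the codimension $j$. The boundary datum on a codimension-one face $F^v$ containing $K$ is $E_K^{F^v}\psi$, which in the geometric decomposition $\cF(F^v)=\bigoplus_{H\le F^v}E_H^{F^v}\oF(H)$ sits entirely in the single summand $H=K$. Every correction term you have available at this stage---namely $\tr_{F^v}^F E_G^F\eta$ for $G$ of codimension less than $j$ in $F$ and $\eta\in\oF(G)$---has zero trace on $K$, because $\tr_K^{F^v}E_G^{F^v}\eta=E_G^K\eta=0$ whenever $\dim G>\dim K$. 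Hence no combination of such corrections can cancel the $K$-component, and the residual on $F^v$ retains trace $\psi$ on $K$; it never lands in $\oF(F^v)$, so it is never in the domain of $E_{F^v}^F$. The telescoping procedure cannot close.

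The missing idea is the paper's collapse map. For $K$ of dimension $m$, define the simplicial map $\Phi_K^\cT\colon\cT\to Q^{m+1}$ sending the vertices of $K$ to $e_0,\dots,e_m$ and every other vertex of $\cT$ to the origin $0$; in barycentric terms, $\Phi_K^\cT(x)=(\lambda_{v_0}(x),\dots,\lambda_{v_m}(x))$. Then $E_K^\cT:=(\Phi_K^\cT)^*\circ E_m\circ((\Psi_K^K)^*)^{-1}$ is defined in one shot, with no induction. The two local-extension properties follow because $\Phi_K^\cT$ restricted to $K$ is the isomorphism $\Psi_K^K\colon K\to T^m$, while for any face $F$ with $K\not\le F$ the image $\Phi_K^\cT(F)$ is a face of $Q^{m+1}$ that misses some $e_i$ and hence does not contain $T^m$, so $E_m$ vanishes there. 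The crucial feature you are missing is that simplicial maps are allowed to collapse many vertices onto one, which lets a single application of $E_m$ handle arbitrary codimension.
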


\begin{proof}
  Assume that we have simplicial extension operators $E_m$ for every $m<n$. Let $K$ be a face of $\cT$ of dimension $m<n$, and let $v_0,\dotsc,v_m$ be the vertices of $K$. Given $K$ and an ordering of its vertices, there is a natural map from $\cT$ to $Q^{m+1}$, which will be key to our construction. Specifically, let $\Phi_K^\cT\colon\cT\to Q^{m+1}$ be the piecewise affine map that sends each vertex $v_i$ of $K$ to the corresponding vertex $e_i$ of $Q^{m+1}$, and sends every other vertex of the triangulation to the vertex $0$ of $Q^{m+1}$. Equivalently, for every point $x$, we have
  \begin{equation*}
    (\lambda_0,\dotsc,\lambda_m)=\Phi_K^\cT(x)=(\lambda_{v_0}(x),\dotsc,\lambda_{v_m}(x)),
  \end{equation*}
  where, for every vertex $v$ of the triangulation, $\lambda_v$ denotes the continuous piecewise linear real-valued function on $\cT$ that is equal to one at $v$ and equal to zero at every other vertex of $\cT$.

  The idea of the construction is that, given $\phi\in\oF(K)$, using the isomorphism between the $m$-simplex $K$ and the standard $m$-simplex $T^m$, we obtain a corresponding form $\psi\in\oF(T^m)$, which we can extend to $E_m\psi\in\cF(Q^{m+1})$. Then, we simply pull back $E_m\psi$ via $\Phi_K^\cT$ to obtain the desired local extension operator $E_K^\cT\phi\in\cF(\cT)$. We now discuss each of these steps in detail.

  First, we establish some notation for the various restrictions of the map $\Phi_K^\cT$. For any face $F$ of the triangulation, let $\Phi_K^F\colon F\to Q^{m+1}$ be the restriction of $\Phi_K^\cT$ to $F$, let $Q_K^F\le Q^{m+1}$ be the image of $\Phi_K^F$, and let $\Psi_K^F\colon F\to Q_K^F$ be the same function as $\Phi_K^F$, but with restricted codomain, so $\Phi_K^F=I_{Q_K^F}^{Q^{m+1}}\circ\Psi_K^F$, where $I_{Q_K^F}^{Q^{m+1}}\colon Q_K^F\hookrightarrow Q^{m+1}$ is the inclusion. In other words, we have the diagram
  \begin{equation}\label{eq:diagrampsi}
    \begin{tikzcd}
      \cT\ar[r,"\Phi_K^\cT"]&Q^{m+1}\\
      F\ar[r,"\Psi_K^F"]\ar[hookrightarrow,u]&Q_K^F\ar[hookrightarrow,u]
    \end{tikzcd}
  \end{equation}

  Observe that if $F=K$, then $\Phi_K^K$ sends each vertex $v_i$ of $K$ to the corresponding unit vector $e_i$, so $Q_K^K=T^m$, and $\Psi_K^K\colon K\to T^m$ is an isomorphism. Next, observe that if $K\not\le F$, then there is vertex $v_i$ of $K$ that is not a vertex of $F$. Consequently, $e_i$ is not a vertex of $Q_K^F$, so $T^m\not\le Q_K^F$. As we will see, these observations correspond to the two properties of local extension operators in Definition~\ref{def:local}.

  By functoriality of $\cF$, the fact that $\Psi_K^K\colon K\to T^m$ is an isomorphism of simplices implies that the pullback $\left(\Psi_K^K\right)^*\colon\cF(T^m)\to\cF(K)$ is an isomorphism of vector spaces, and also that $\left(\Psi_K^K\right)^*\colon\oF(T^m)\to\oF(K)$ is an isomorphism between the corresponding spaces of vanishing trace. So, we now define our local extension maps $E_K^\cT$ via the following diagram.
  \begin{equation}\label{eq:diagramext}
    \begin{tikzcd}
      \cF(Q^{m+1})\ar[r, "\left(\Phi_K^\cT\right)^*"]&\cF(\cT)\\
      \oF(T^m)\ar[u, "E_m"]\ar[r, "\left(\Psi_K^K\right)^*", "\cong"']&\oF(K)\ar[u,"E_K^\cT"]
    \end{tikzcd}
  \end{equation}
  In other words, we define $E_K^\cT:=\left(\Phi_K^\cT\right)^*\circ E_m\circ\left(\left(\Psi_K^K\right)^*\right)^{-1}$.

  It remains to check that the maps $E_K^\cT$ satisfy the properties of local extension operators in Definition~\ref{def:local}. To do so, we apply $\cF$ to Diagram~\eqref{eq:diagrampsi} and combine it with we Diagram~\eqref{eq:diagramext} to obtain
  \begin{equation*}
    \begin{tikzcd}
      \cF(Q^F_K)\ar[r,"\left(\Psi_K^F\right)^*"]&\cF(F)\\
      \cF(Q^{m+1})\ar[r, "\left(\Phi_K^\cT\right)^*"]\ar[u,"\tr^{Q^{m+1}}_{Q^F_K}"]&\cF(\cT)\ar[u,"\tr^\cT_F"]\\
      \oF(T^m)\ar[u, "E_m"]\ar[r, "\left(\Psi_K^K\right)^*", "\cong"']&\oF(K)\ar[u,"E_K^\cT"]
    \end{tikzcd}    
  \end{equation*}

  If $F=K$, so $Q_K^F=T^m$, then the composition in the left column is $\tr^{Q^{m+1}}_{T^m}E_m\colon\oF(T^m)\to\cF(T^m)$, which is the inclusion by Definition~\ref{def:simpext}. The top and bottom maps of the diagram are both the isomorphism $\left(\Psi_K^K\right)^*$, so the composition in the right column, $\tr^\cT_FE_K^\cT\colon\oF(K)\to\cF(F)$, is also the inclusion, as desired.

  Meanwhile, if $K\not\le F$, then we have observed that $T^m\not\le Q_K^F$, so the composition in the left column, $\tr_{Q^F_K}^{Q^{m+1}}E_m$, is zero by Definition~\ref{def:simpext}. Since the diagram commutes and the bottom map $\left(\Psi_K^K\right)^*$ is an isomorphism, we conclude that the composition in the right column, $\tr_F^\cT E_K^\cT$, is zero as well, as desired.

  Recall that, so far, we assumed that $K$ has dimension $m<n$. It remains then to consider the case where $K$ has dimension $m=n$, the maximal dimension of the triangulation. In this case, we simply extend by zero, that is, we set $E_K^K$ to be the inclusion, and $E_K^F$ to be zero for all other faces $F$. So, the first two properties of Definition~\ref{def:consistent} follow immediately. For the third property, we assume $G\le F$, and show that $\tr_G^FE_K^F=E_K^G$. If neither $F$ nor $G$ are $K$, then both sides of the equation are zero. If $G=K$, then $F$ must also be $K$ because $K$ has maximal dimension; in this case, both sides of the equation are the inclusion. Finally, if $F=K$ and $G$ is not, then for all $\psi\in\oF(K)$, we have $\tr_G^KE_K^K\psi=\tr_G^K\psi=0$ because $\psi$ has vanishing trace, and $E_K^G=0$ as just defined. Per Proposition~\ref{prop:localconsistent}, the corresponding operator $E_K^\cT$ is a local extension operator, as desired.

  Finally, it remains to prove the converse, so we assume that we have local extension operators and hence consistent extension operators $E_K^F$. We will construct the simplicial extension operators $E_m$ for every $m<n$. We have all of the ingredients already. Let $T$ be a face of the triangulation of dimension $m+1$, and let $K$ be a subface of $T$ of dimension $m$. Using the same notation as before, observe that $\Phi_K^T\colon T\to Q^{m+1}$ is an isomorphism because the vertices $v_0,\dots,v_m$ of $K$ get mapped to vertices $e_0,\dots,e_m$ of $Q^{m+1}$, and the remaining vertex of $T$ gets mapped to the remaining vertex $0$ of $Q^{m+1}$. Also, as before, if we restrict the domain of $\Phi_K^T$ to $K$ and the codomain to $T^m$, we obtain the isomorphism $\Psi_K^K\colon K\to T^m$. So, given $E_K^T$, we may define $E_m$ using the diagram
  \begin{equation*}
    \begin{tikzcd}
      \cF(Q^{m+1})\ar[r,"\left(\Phi_K^T\right)^*","\cong"']&\cF(T)\\
      \oF(T^m)\ar[u, "E_m"]\ar[r, "\left(\Psi_K^K\right)^*", "\cong"']&\oF(K)\ar[u,"E_K^T"]
    \end{tikzcd}    
  \end{equation*}
  In other words,
  \begin{equation*}
    E_m:=\left(\left(\Phi_K^T\right)^*\right)^{-1}\circ E_K^T\circ\left(\Psi_K^K\right)^*\colon\oF(T^m)\to\cF(Q^{m+1}).
  \end{equation*}
  Since $\Phi_K^T\colon T\to Q^{m+1}$ is an isomorphism, it restricts to an isomorphism $\Psi_K^F\colon F\to Q_K^F$ for any $F\le T$, and every subface of $Q^{m+1}$ is of the form $Q_K^F$ for some $F\le T$. Consequently, we can extend the above diagram to
  \begin{equation*}
    \begin{tikzcd}
      \cF(Q_K^F)\ar[r,"\left(\Psi_K^F\right)^*", "\cong"']&\cF(F)\\
      \cF(Q^{m+1})\ar[r,"\left(\Phi_K^T\right)^*","\cong"']\ar[u,"\tr^{Q^{m+1}}_{Q_K^F}"]&\cF(T)\ar[u,"\tr^T_F"]\\
      \oF(T^m)\ar[u, "E_m"]\ar[r, "\left(\Psi_K^K\right)^*", "\cong"']&\oF(K)\ar[u,"E_K^T"]
    \end{tikzcd}    
  \end{equation*}
  The desired properties of $E_m$ in Definition~\ref{def:simpext} then follow from the corresponding properties of $E_K^F$ via these isomorphisms. Specifically, if $Q_K^F=T^m$, then $F=K$, so the composition in the right column is $\tr^T_KE_K^T=E_K^K$, which is the inclusion $\oF(K)\to\cF(K)$. Hence, the composition in the left column, $\tr_{T^m}^{Q^{m+1}}E_m$ is the inclusion $\oF(T^m)\to\cF(T^m)$. Meanwhile, if $T^m\not\le Q_K^F$, then $K\not\le F$, so $\tr_F^TE_K^T=E_K^F=0$, and so $\tr_{Q_K^F}^{Q^{m+1}}E_m=0$ as well.
\end{proof}

\begin{corollary}
  Let $\cF$ be a simplicial function space that has simplicial extension operators $E_m\colon\oF(T^m)\to\cF(Q^{m+1})$ for every $m<n$ for some nonnegative integer $n$. Then for every simplicial complex $\cT$ of dimension at most $n$, we have a geometric decomposition
  \begin{equation*}
    \cF(\cT)=\bigoplus_{F\in\fT}E_F^\cT\oF(F).
  \end{equation*}
  and, presuming we have chosen spaces $\oF(F)^\dagger$, a dual decomposition
  \begin{equation*}
    \cF(\cT)^*=\bigoplus_{F\in\fT}\left(\tr^\cT_F\right)^*\oF(F)^\dagger.    
  \end{equation*}
\end{corollary}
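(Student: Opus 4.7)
The proof is a straightforward assembly of the three key results already in the paper: Theorem~\ref{thm:simpext} converts simplicial extension operators (a small, mesh-independent piece of data) into local extension operators on a specific mesh, and Corollaries~\ref{cor:globaldecomp} and \ref{cor:dualdecomp} then convert local extension operators into primal and dual geometric decompositions.

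The plan is as follows. First I would reconcile the dimension hypotheses. Let $n'$ denote the maximal dimension of $\cT$; by assumption $n'\le n$. Theorem~\ref{thm:simpext} requires simplicial extension operators $E_m$ for every $m<n'$. Since we have $E_m$ for every $m<n$, and $\{m : m<n'\}\subseteq\{m : m<n\}$, the hypothesis of Theorem~\ref{thm:simpext} is satisfied. Applying the theorem to $\cT$ therefore produces a local extension operator $E_K^\cT\colon\oF(K)\to\cF(\cT)$ for every $K\in\fT$.

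With these local extension operators in hand, I would invoke Corollary~\ref{cor:globaldecomp} directly to obtain the primal decomposition
\begin{equation*}
  \cF(\cT)=\bigoplus_{F\in\fT}E_F^\cT\oF(F),
\end{equation*}
and, using the chosen subspaces $\oF(F)^\dagger$, invoke Corollary~\ref{cor:dualdecomp} to obtain the dual decomposition
\begin{equation*}
  \cF(\cT)^*=\bigoplus_{F\in\fT}\left(\tr^\cT_F\right)^*\oF(F)^\dagger.
\end{equation*}
Both corollaries take local extension operators on a finite partially ordered set as input and deliver exactly the claimed decompositions as output, so no further work is required.

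I do not anticipate any real obstacle here: the corollary is purely a packaging result. The only minor bookkeeping is verifying that $\fT$ is finite (which it is, since $\cT$ is a simplicial complex and hence has finitely many faces) and confirming the dimension compatibility noted above. Everything else is already proved.
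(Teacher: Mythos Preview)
Your proposal is correct and follows exactly the paper's own approach: the paper's proof is the single sentence ``Combine Theorem~\ref{thm:simpext} with Corollaries~\ref{cor:globaldecomp} and \ref{cor:dualdecomp},'' and you have simply unpacked that sentence with the appropriate dimension bookkeeping. (One small caveat: simplicial complexes are not finite by definition, so your parenthetical justification for $\fT$ being finite is not quite right; finiteness is an implicit standing assumption in the finite element context, and the paper's own proof does not address it either.)
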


\begin{proof}
  Combine Theorem \ref{thm:simpext} with Corollaries~\ref{cor:globaldecomp} and \ref{cor:dualdecomp}.
\end{proof}

For the simplicial finite element exterior calculus spaces $\cF=\mathcal P_r\Lambda^k$ and $\cF=\mathcal P_r^-\Lambda^k$, the extension operators constructed in \cite{afw09} for $r\ge1$ are simplicial extension operators. However, for $r=0$, geometric decomposition can fail; we provide this illustrative example.

\begin{example}\label{eg:constant}
  We consider the function space $\cF=\mathcal P_0\Lambda^k$ of $k$-forms with constant coefficients. By dimension, if $m<k$, then $\mathcal P_0\Lambda^k(T^m)=0$, so we have trivial extension maps $E_m$ for $m<k$. However, as we will show, there does not exist a simplicial extension map for $m=k$. First, observe that, by dimension, $\mathcal P_0\Lambda^k(F)=0$ for any proper subface $F$ of $T^k$, so all forms on $T^k$ have vanishing trace. In other words, $\oP_0\Lambda^k(T^k)=\cP_0\Lambda^k(T^k)$. This space is one-dimensional, consisting of constant multiples of the volume form $\omega$ on $T^k$. The definition of $E_k$ would then require that $E_k\omega\in\cP_0\Lambda^k(Q^{k+1})$ have nonzero trace on $T^k$ but zero trace on every other proper subface of $Q^{k+1}$. As we will show, there does not exist a constant $k$-form on $Q^{k+1}$ satisfying these properties.

  Let $\starop\dl_i=(-1)^i\dl_0\wedge\dots\wedge\widehat{\dl_i}\wedge\dots\wedge\dl_k$. These $k$-forms are a basis for $\cP_0\Lambda^k(Q^{k+1})$. So, for an arbitrary $\alpha\in\cP_0\Lambda^k(Q^{k+1})$, we can write $\alpha=\sum_ia_i\starop\dl_i$ for real numbers $a_i$. Let $Q_i$ be the face of $Q^{k+1}$ given by $\lambda_i=0$. Observe that $\tr^{Q^{k+1}}_{Q_i}\starop\dl_j$ is nonzero if $i=j$ and zero if $i\neq j$. Consequently, $\tr^{Q^{k+1}}_{Q_i}\alpha=a_i\tr^{Q^{k+1}}_{Q_i}\starop\dl_i$. If all of these traces are zero, then every $a_i$ is zero, so $\alpha$ is zero, and so its trace on $T^k$ is zero, not $\omega$.

  We conclude that $\cP_0\Lambda^k$ has a geometric decomposition for every triangulation of dimension at most $k$. If the dimension of the triangulation is strictly smaller than $k$, this statement is tautological because $\cP_0\Lambda^k(\cT)=0$. On the other hand, if the dimension of the triangulation is equal to $k$, then $\cP_0\Lambda^k(\cT)$ is equivalent to the space of discontinuous piecewise constant scalar fields. The summands in the geometric decomposition $\cP_0\Lambda^k(\cT)=\bigoplus_{F\in\fT}E_F^\cT\oP_0\Lambda^k(F)$ are nonzero only when $\dim F=k$, so the geometric decomposition simply expresses the observation that $\cP_0\Lambda^k(\cT)$ is a direct sum of the constant fields on each $k$-dimensional element. Finally, if the triangulation has dimension greater than $k$, then local extension operators do not exist.
\end{example}

\section*{Acknowledgements}

This project was supported by NSF award DMS-2411209. I would also like to thank Evan Gawlik for helpful discussions.

\printbibliography

\end{document}